\setlist[enumerate]{leftmargin=.5in}
\setlist[itemize]{leftmargin=.5in}
\crefname{hypothesis}{Hypothesis}{Hypotheses}
\title{A Repulsive Bounded-Confidence Model of Opinion Dynamics in Polarized Communities\thanks{Submitted to the editors December 8, 2022.
\funding{This work was partially funded by the James S. McDonnell Foundation Postdoctoral Fellowship }}}
\author{Claudia Kann\thanks{Department of Humanities and Social Sciences, California Institute of Technology
  (\email{ckann@caltech.edu}).}
\and Michelle Feng\thanks{Computing + Mathematical Sciences Department, California Institute of Technology 
  (\email{mfeng@caltech.edu}).}}
\newcommand*{\addFileDependency}[1]{
  \typeout{(#1)}
  \@addtofilelist{#1}
  \IfFileExists{#1}{}{\typeout{No file #1.}}
}
\newcommand*{\myexternaldocument}[1]{%
    \externaldocument{#1}%
    \addFileDependency{#1.tex}%
    \addFileDependency{#1.aux}%
}
\renewcommand{\vec}{\mathbf}
\begin{document}

\maketitle

\begin{abstract} Collective opinions affect civic participation, governance, and societal norms. Due to the influence of opinion dynamics, many models of their formation and evolution have been developed. A commonly used approach for the study of opinion dynamics is bounded-confidence models. In these models, individuals are influenced by the opinions of others in their network. They generally assume that individuals will formulate their opinions to resemble those of their peers. In this paper, inspired by the dynamics of partisan politics, we introduce a bounded-confidence model in which individuals may be repelled by the opinions of their peers rather than only attracted to them. We prove convergence properties of our model and perform simulations to study the behavior of our model on various types of random networks. In particular, we observe that including opinion repulsion leads to a higher degree of opinion fragmentation than in standard bounded-confidence models.
\end{abstract}

\begin{keywords}
  opinion dynamics, bounded confidence, mathematical political science, congressional voting
\end{keywords}

\begin{AMS}
91D30, 91F10
\end{AMS}

\section{Introduction}
Opinions dictate how individuals interact with society. They influence who we are friends with, how we vote, and what we consume. At the individual and collective level, opinions shape our lives and our social interactions. Understanding how opinions are formed and their dynamics provides a framework for studying changes in our society. The role of opinions in politics and governance is a prominent part of public discourse in the U.S. Inspired by discussions of political polarization and partisan politics, this paper presents a mathematical approach to modelling polarized opinion dynamics where individuals feel both attractive and repulsive forces.

The influence of public opinion on politics have been studied by philosophers, sociologists, and social theorists\cite{habermas1991,blumer1948,speier1950}.
Contemporary approaches to studying opinions frequently seek to quantify them. In this paper, we focus on the dynamics of opinions. We are interested in studying how opinions in a society shift as a result of relationships between individuals. Various models for studying individual opinions exist\cite{grabowski2006,clifford1973,hk2002,martins2008}. We will focus on bounded-confidence models. Bounded-confidence models are a class of models that suppose individuals change their opinions based on their relationships, when their opinions are already close to those of their peers. That is, if someone's opinion is very far away from my own, even if I have a relationship with them, I will not base my opinions on theirs. Many bounded-confidence models have been developed and studied. They include examinations of consensus formation\cite{dittmer2001,fortunato2005}, polarization\cite{hegselmann2020,sirbu2019}, and a large variety of model extensions for application to real-world opinions \cite{kan2021,hickok2022,brooks2020,altafini2018}. 

We consider polarization, and the notion that individuals may form their opinions by being contrarian. If I have an adversarial relationship with someone, I may specifically choose to hold an opinion that is different from their's. Similar to other bounded-confidence models, we maintain the idea that individuals are mostly influenced by others whose opinions are already somewhat close to our own. We are most interested in understanding how collective opinions in this model behave. What types of relationships and community structures lead to strong polarization within a society? How might we extend those observations to real-world applications and data?

The paper is organized as follows. We introduce the motivation for our model in 
\cref{sec:bg} and define our model in \cref{sec:model}. We present analytical results in \cref{sec:proofs}, and perform numerical simulations on synthetic networks (\cref{sec:synthetic}). Conclusions follow in
\cref{sec:conclusions}.

\section{Background and motivation}
\label{sec:bg}
In this section, we introduce the motivation for our proposed model of opinion dynamics. In \cref{ssec:political_bg}, we discuss political science research which motivates our modelling choices, and in \cref{ssec:bc_bg} we introduce the Hegselmann--Krause model for opinion dynamics, which we use as a starting point in the formulation of our model.

\subsection{Political Science motivation}
\label{ssec:political_bg}
In political science it is common to think of ideologies as points in space, as being on the left or the right, liberal or conservative. This spatial view of politicians and individuals drives much of the work that is done on voting behavior, both at the individual and legislative levels, as well as the models of strategic behavior within Congress. The original conception of this model is often attributed to Downs and his median voter theory \cite{downs1957economic}. This work was followed by further theoretical work on legislative organization \cite{baron1994sequential, riker1980implications, shepsle1979institutional,hitt2017spatial}, electoral competition \cite{ansolabehere2001candidate}, and the courts \cite{mcnollgast1994politics} to name a few. 

The most common method of obtaining ideological spacial estimates for members of congress is NOMINATE \cite{poole1985spatial}. It uses the observed voting choices and an item response model (IRT) to recover spatial distances. This work has been expanded to include bridges over time to estimate changes in the distribution og congressional representatives across congresses \cite{poole2000congress}. More recently, such bridging techniques and new data sources have been used in order to get consistent measurements for politicians in different chambers as well as candidates who do not win their election \cite{bonica2014mapping, clinton2004statistical, bailey2007comparable, shor2010bridge}. 

In this article we present a bounded confidence model in which there are both attractive and repulsive links between members. This is motivated by the idea of varying salience of issues among members of congress. While representatives may have ideological positions that can be uncovered through voting behavior, there is reason to believe that politicians are drawn to fellow representatives with similar priorities. Therefore, working with other members of congress causes their ideologies to converge. In contrast, they make a point of distancing themselves from representatives who's salient issues run in opposition to them, regardless of other similarities. This would cause them to attempt to distinguish themselves. From an electoral perspective, this distinguishing is important and has not yet, to the our knowledge, been accounted for in spatial models.

\subsection{Bounded-Confidence models}
\label{ssec:bc_bg}
The model we propose is a variant of the Hegselmann--Krause (HK) model\cite{hk2002}. The HK model considers the opinions of a group of interacting agents who influence each other. In the HK model, agents are modelled in a network, with connections between them. Agents who are connected to each other will affect each others' opinions, but only if their opinions are sufficiently close. That is, even if two agents are connected, if their opinions are far apart, they will not take each other into consideration as they form new opinions.

The precise mathematical statement of HK is as follows. Suppose $G = (V,E)$ is a network, with associated adjacency matrix $A$. Then at each time step $t$, we denote the opinions of nodes $i \in V$ with the opinion vector $\vec{x}(t)$. We associate to the model a confidence bound $c$. Opinions are updated according to the following rule:
\begin{equation}
    x_i(t+1) = \frac{\sum_{j \in V} A_{ij}x_j(t)\vec{1}_{|x_j(t) - x_i(t)|<c}}{\sum_{j \in V}A_{ij}\vec{1}_{|x_j(t) - x_i(t)|<c}}
    \label{eq:HK1}
\end{equation}
That is, at timestep $t+1$, we examine all neighbors of $i$ which are within the confidence bound, and then average their opinions. Note that we can reformulate this as 
\begin{equation}
    x_i(t+1) = x_i(t) + \frac{\sum_{j \in V}A_{ij}(x_j(t) - x_i(t))\vec{1}_{|x_j(t) - x_i(t)|<c}}{\sum_{j \in V}A_{ij}\vec{1}_{|x_j(t) - x_i(t)|<c}}
    \label{eq:HK2}
\end{equation}

While the model formulation given above is for 1-dimensional opinions, by expanding the notation, the same averaging scheme can be used for higher-dimensional opinions as well.

Previous studies of the HK model have found that it converges in polynomial time\cite{bhattacharyya2013}. In \cite{hk2002}, Hegselmann and Krause also investigated the steady states of the model. Specifically, the HK model suggests that as the confidence bound increases, there is a transition between three types of steady states. For low confidence bounds, the steady state has many possible opinions and no particularly dominant opinions (we refer to this as {\em fragmentation}). As the confidence bound increases, steady states begin to exhibit only a small number of dominant opinions ({\em polarization}). For confidence bounds beyond a certain threshold, we observe only a single dominant opinion ({\em consensus}). These three different steady states can be seen in \cref{fig:hk_example}. In later sections, we will discuss how the steady states of our model compare.

\begin{figure}[htbp!]
    \centering
    \includegraphics{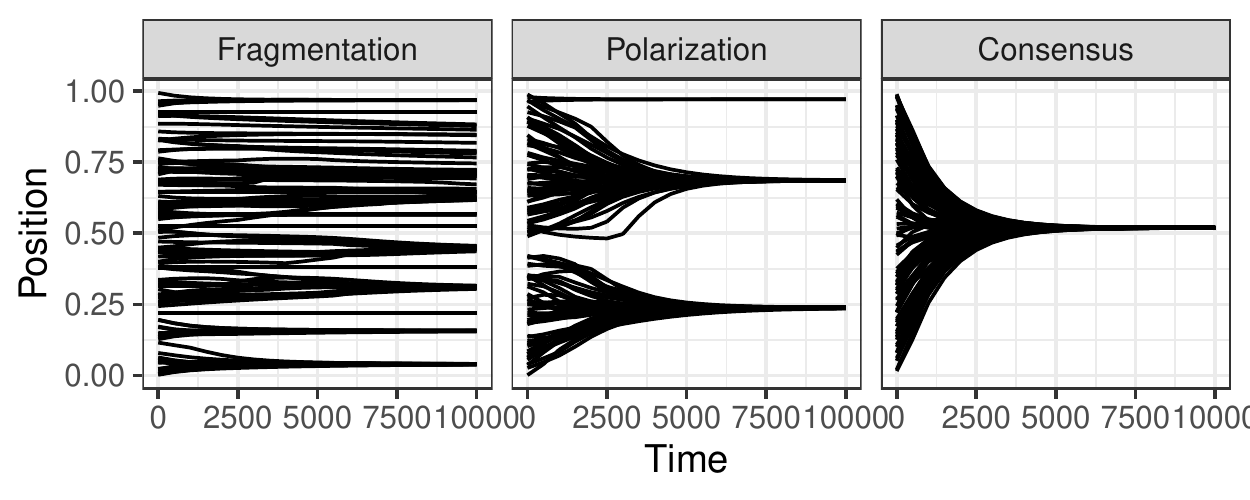}
    \caption{In this figure a Erd\H{o}s--Renyi Random Graph is created with connection probability of 25\%, the evolution three confidence intervals (0.05, 0.2, 0.6) are shown in order to demonstrate the three steady states of the model}
    \label{fig:hk_example}
\end{figure}

\section{Model statement}
\label{sec:model}
The key mechanism that drives our model is the inclusion of repulsive edges, so that individual nodes can push each other way. Suppose $G=(V,E)$ is a network with associated adjacency matrix $A$. For any pair of nodes $i,j$, if $A_{ij} = 1$, there is an attractive edge between them. If $A_{ij} = -1$, there is a repulsive edge. Otherwise, $A_{ij}=0$ and there is no edge between the nodes. As in the original model, we assume that for all $i$, $A_{ii} = 1$. This assumption ensures that a node's existing position is included in the calculation for the next step. We let $\vec{x}$ be the vector of opinions, with $x_i(t)$ representing the opinion of node $i$ at time $t$. 

We define the variable $M$ as follows:
\begin{equation}
    M_{ij}(t) = \left\{
        \begin{array}{c c}
             x_j(t) - x_i(t) & A_{ij} \geq 0 \\
             \text{sign}(x_j(t) - x_i(t))|c - |x_j(t) - x_i(t)||& A_{ij} = -1, |x_j(t) - x_i(t)| > 0 \\
             \text{sign}(j-i)c & A_{ij}= -1, x_j(t) = x_i(t)
        \end{array}
    \right.
\label{eq:distancescaling}
\end{equation}
Intuitively, $M_{ij}$ represents a signed distance which node $i$ will potentially travel because of node $j$. The effect of $M$ is that repulsive forces grow weaker as nodes move farther away from each other. Note that the third row of $M_{ij}$ covers the case where two nodes have the same opinion and repulse each other. In this case, the node with the higher index is pushed towards a higher opinion, while the node with the lower index is pushed towards a lower opinion. In simulation, this situation is unlikely, as it is rare that two nodes which are repulsed share the precise same value. We updated opinions using the following rule:

\begin{equation}
    x_i(t+1) = x_i(t) + \frac{\sum_{j \in V} A_{ij}M_{ij}(t)\mathbf{1}_{|x_j(t) - x_i(t)|<c}}{\sum_{j \in V}|A_{ij}|\mathbf{1}_{|x_j(t)-x_i(t)|<c}}
\label{eq:update}
\end{equation}
Note that if there are no repulsive edges, \cref{eq:update} reduces precisely to the HK model as stated in \cref{eq:HK2}. 

The incorporation of \cref{eq:distancescaling} into the model is to aid convergence. To see why, suppose that we instead naively replaced $M_{ij}(t)$ in \cref{eq:update} with $x_j(t) - x_i(t)$. We can quickly see from the following three node example in \cref{fig:threenode_broken} that opinions might ping pong forever, with attractions pulling opinions together which then rocket away from each other. We further discuss the steady states of this model in~\cref{sec:proofs} and~\cref{sec:synthetic}.

\begin{figure}[!htbp]
    \centering
    \includegraphics[width = \textwidth]{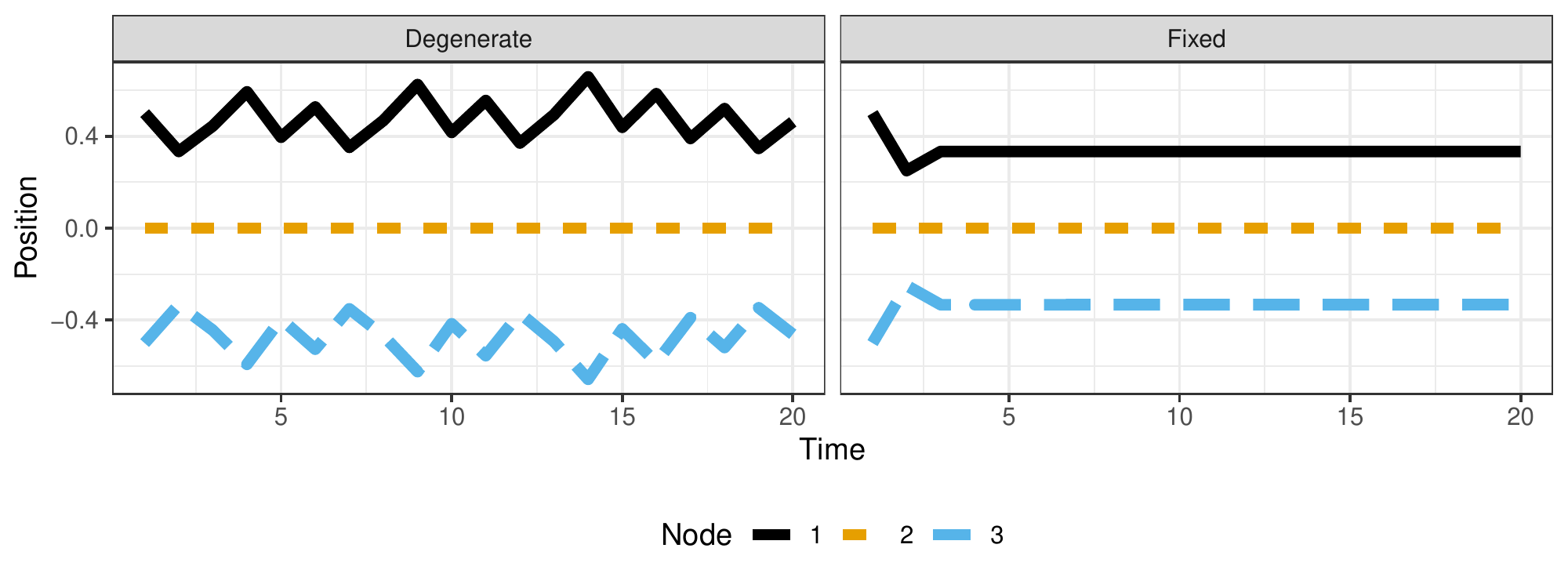}
    \caption{Example with three nodes, where the top and bottom node repulse each other, while the central node attracts both of the others. Without the incorporation of distance scaling, the central node pulls the two outer nodes towards it until they are within confidence of each other, at which point they push each other away until they are no longer within confidence of each other. However, because they are still within confidence of the central node, they are pulled back in, and the cycle repeats so that the model never converges. Using the value of $M_{ij}(t)$ represented in \cref{eq:distancescaling}.}
    \label{fig:threenode_broken}
\end{figure}

When there exist repulsive edges, our model gives rise to several forms of behaviors that differ from the standard HK model. First, the initial range of opinions does not necessarily bound the final set of opinions. In our model, if there exist enough repulsive edges, it is possible for the final opinions to span a much wider range than the initial opinions (as shown in \cref{fig:widthexample}). We prove a bound on final opinions in \cref{thm:width}. 

\begin{figure}[htbp!]
    \centering
    \includegraphics[width = \textwidth]{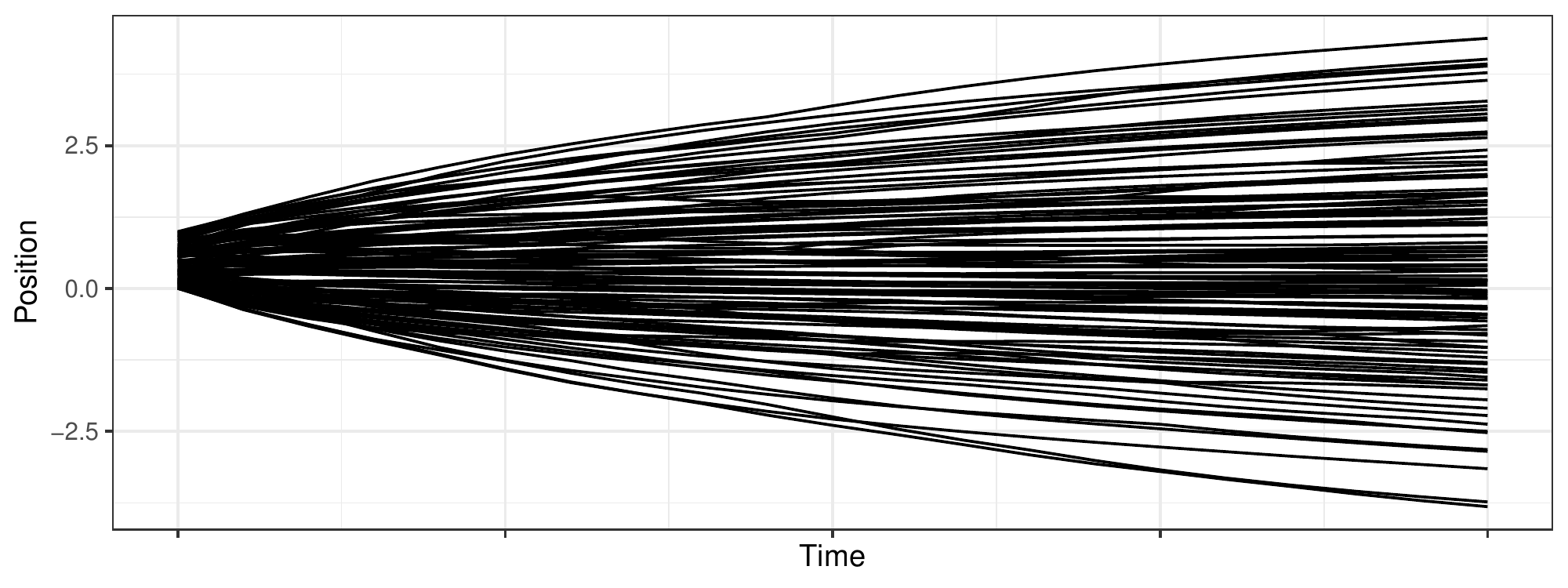}
    \caption{This image shows a simulation where final opinion width was wider than initial opinion width. The edges are created as described in Equation \cref{eq:ER} where $p_1 = 40\%$, $p_2 = 80\%$ and a confidence bound of 1.6}
    \label{fig:widthexample}
\end{figure}

Second, with repulsive edges, connected nodes within confidence of each other may not converge to a single opinion. In HK, we can consider the {\em receptivity subgraph}, or the subgraph of $G$ where edges are pruned if they connect nodes outside confidence bound of each other. In HK, the connected components of the receptivity subgraph will converge to single opinions. In our model, because tensions between attractive and repulsive edges exist, it is possible for nodes to converge to an opinion which is different from its neighbors at stopping time. For example, the same three-node example in \cref{fig:threenode_broken} converges to a state where all three nodes are still connected and within confidence of each other, but do not have the same opinion.

\section{Analytical results}
\label{sec:proofs}

In this section we give some simple proofs about steady states of our model.
We note that while the model converges in most cases (we suspect all, based on the analytical results in \cref{sec:synthetic}), it is not necessarily true that the final opinions are bounded by the initial opinions. The inclusion of negative edges means that repulsive forces between nodes can push the final opinions well outside the bounds of the initial opinions, though this process can only continue so far before nodes are no longer within confidence of each other. To that end, we propose a simple bound on final opinions based on the number of negative edges in~\cref{thm:width}.

We first prove the bound in the simplest case (that of 2 nodes).
\begin{proposition}
Let $G = (V,E)$ be a network with 2 nodes , so that $V = \{0, 1\}$. Then the dynamical process described by~\cref{eq:update} converges, and at time of convergence $T$, 
\begin{equation*}
    \left|x_0(T) - x_1(T) \right| \leq \max\left\{c, \left|x_0(0) - x_1(0)\right|\right\}
\end{equation*}
\end{proposition}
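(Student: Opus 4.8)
The plan is to track the signed gap $d(t) = x_1(t) - x_0(t)$ and carry out a short case analysis driven by the single off-diagonal entry $A_{01} = A_{10} \in \{-1, 0, 1\}$ together with whether the two nodes lie within the confidence bound. Since there are only two nodes, each denominator in \cref{eq:update} is a sum of the self-term ($A_{ii} = 1$) and at most the cross-term, so it equals either $1$ or $2$; this makes every update computable in closed form, and I expect to show that convergence occurs within a single nontrivial step in every case.

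First I would dispose of the cases where no motion occurs. If $A_{01} = 0$, or if $|d(0)| \geq c$ so that the indicator $\mathbf{1}_{|x_j - x_i| < c}$ vanishes on the cross-term, then each node sees only its own self-loop, which contributes $M_{ii} = 0$ to the numerator; hence $x_i(t+1) = x_i(t)$, the process is already at a fixed point, and $|d(T)| = |d(0)| \leq \max\{c, |d(0)|\}$ trivially. Next, for an attractive edge within confidence ($A_{01} = 1$ and $|d(0)| < c$), I would compute directly from \cref{eq:update} that both nodes move to the midpoint $(x_0 + x_1)/2$ in one step, so $d(1) = 0$ and the bound holds with room to spare.

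The substantive case is a repulsive edge within confidence ($A_{01} = -1$ and $0 < |d(t)| < c$), where I would substitute the second branch of \cref{eq:distancescaling}. Writing $M_{01} = \text{sign}(d)(c - |d|)$ and $M_{10} = -\text{sign}(d)(c - |d|)$ and using denominator $2$, the two nodes move apart by $(c - |d|)/2$ each, so that $d(t+1) = d(t) + \text{sign}(d(t))(c - |d(t)|)$, which equals exactly $\pm c$. Since the confidence condition is strict, the pair is then no longer within confidence and freezes, giving $|d(T)| = c \leq \max\{c, |d(0)|\}$. The degenerate subcase $x_0(0) = x_1(0)$ is handled identically using the third branch of \cref{eq:distancescaling}, which symmetrically separates the coincident opinions to gap exactly $c$.

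I expect the main delicacy --- rather than a genuine obstacle --- to be bookkeeping at the confidence boundary: one must use that the inequality in \cref{eq:update} is strict, so that reaching gap exactly $c$ halts the dynamics rather than triggering a further (and potentially oscillatory) update. The exact landing at $c$ relies on the precise form of the distance scaling in \cref{eq:distancescaling} and on the denominator being exactly $2$ for two nodes; verifying this closed-form jump, and checking that it holds regardless of the sign of $d$, is the one computation I would carry out with care. Convergence in all cases then follows because each branch reaches a fixed point in at most one step.
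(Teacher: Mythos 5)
Your proposal is correct and follows essentially the same route as the paper: an exhaustive case analysis over the sign of $A_{01}$ and the confidence condition, with a closed-form one-step computation showing the repulsive case lands the gap at exactly $c$ and then freezes because the indicator is strict. If anything, your version is slightly more complete than the paper's, since you track the signed gap $d(t)$ for either ordering of the opinions and explicitly handle the coincident case $x_0(0)=x_1(0)$ via the third branch of \cref{eq:distancescaling}, whereas the paper's computation implicitly assumes $x_1(0)>x_0(0)$.
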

\begin{proof}
Suppose that there are no edges, or $E = \emptyset$. Then the model converges at time $T=1$, and 
\begin{equation*}
    \left|x_0(T) - x_1(T) \right| = \left|x_0(0) - x_1(0)\right|
\end{equation*}

Now suppose that $E = \{(x_0, x_1)\}$, and $|x_0(0) - x_1(0)| \geq c$. Then the update rule will result in no changes, the model converges at $T=1$
\begin{equation*}
    \left|x_0(T) - x_1(T) \right| = \left|x_0(0) - x_1(0)\right|
\end{equation*}

Now suppose that $|x_0(0) - x_1(0)| < c$. If $A_{01} = -1$, the two nodes repel each other. Then
\begin{align*}
    x_1(1)  &= x_1(0) + \frac{(x_1(0) - x_1(0)) + c - (x_1(0) - x_0(1))}{2} \\
    x_0(1) & = x_0(0) + \frac{(x_0(0) - x_0(0))-( c - (x_1(0) - x_0(0))}{2} \\
    x_1(1) -  x_0(1)&= x_1(0) - x_0(0) + \frac{2(c - (x_1(0) - x_0(0)))}{2} \\
    &= c
\end{align*}
and $x_1(1) - x_0(1) >= c$, so that after this time, these two nodes will no longer affect each other, and cannot push each other further, so the model has converged, and $\max_{i,j}\left|x_0(T) - x_1(T)\right| \leq c$.

If $A_{01}=1$, the two nodes attract each other, and the model is equivalent to standard Hegselmann--Krause, so that we have convergence to a single point and
\begin{equation*}
    |x_0(T) - x_1(T)| = 0
\end{equation*}

This covers all possible cases, and the proposition is proven.
\end{proof}
 
The main point to note from this two-node proof is that the repulsive forces between any two nodes will contribute to attempting to push them apart to a distance of precisely $c$. Also note that any node cannot move more than $c$ in any direction over the course of one timestep, because $|M_{ij}(t)| \leq c$.

We now prove several lemmas that we will use to prove ~\cref{thm:width}.

\begin{lemma}
Suppose $i \in V$ a node. Define the following sets:
\begin{align*}
    V_i^+(t) &= \left\{j \in V : A_{ij} = 1 \text{ and } |x_j(t) - x_i(t)| < c\right\}\\
    U_i(t) &= \left\{j \in V: A_{ij} = -1\text{ and }\left[( 0 < x_j(t) - x_i(t) < c) \text{ or } \left(x_j(t) = x_i(t) \text{ and } j > i\right)\right]\right\} \\
    L_i(t) &= \left\{j \in V: A_{ij} = -1 \text{ and } \left[( 0 < x_i(t) - x_j(t) < c) \text{ or } \left(x_j(t) = x_i(t) \text{ and } i > j\right)\right]\right \}
\end{align*}
Then
\begin{equation}
    x_i(t+1) = \frac{\sum_{j \in V_i^+(t)}x_j(t) + \sum_{j \in U_i(t)} (x_j(t) - c) + \sum_{j \in L_i}(x_j(t)+c)}{|V_i^+(t)| + |U_i(t) + |L_i(t)|}
\label{eq:average}
\end{equation}
\label{lemma:average}
\end{lemma}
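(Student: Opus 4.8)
The plan is to rewrite the right-hand side of~\cref{eq:update} over the common denominator $D := \sum_{j \in V}|A_{ij}|\mathbf{1}_{|x_j(t)-x_i(t)|<c}$ and then match the resulting numerator term-by-term against~\cref{eq:average}. Writing $N$ for the numerator sum $\sum_{j \in V} A_{ij}M_{ij}(t)\mathbf{1}_{|x_j(t) - x_i(t)|<c}$, I would first record the identity $x_i(t+1) = \frac{x_i(t)D + N}{D}$, so that it suffices to show $D = |V_i^+(t)| + |U_i(t)| + |L_i(t)|$ and that $x_i(t)D + N$ equals the numerator displayed in~\cref{eq:average}.

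For the denominator, I would observe that $|A_{ij}| = 1$ exactly when there is an edge (attractive or repulsive), so $D$ counts precisely the neighbors $j$ of $i$ with $|x_j(t) - x_i(t)| < c$. I would then argue that these neighbors are partitioned into the three sets: the attractive ones (together with the self-loop $j = i$, since $A_{ii}=1$, which also guarantees $D \geq 1$) form $V_i^+(t)$, while each repulsive neighbor within confidence lands in exactly one of $U_i(t)$, $L_i(t)$. The only subtle point in this partition is a repulsive neighbor with $x_j(t) = x_i(t)$, which is within confidence and is assigned to $U_i(t)$ or $L_i(t)$ according to whether $j > i$ or $i > j$; since $A_{ii}=1$ there is no repulsive self-loop, so the index comparison is always strict and the assignment is unambiguous. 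This gives $D = |V_i^+(t)| + |U_i(t)| + |L_i(t)|$.

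The heart of the argument is then a case computation of $A_{ij}M_{ij}(t)$ on each piece of the partition, using the definition~\cref{eq:distancescaling}. On $V_i^+(t)$ this is simply $x_j(t) - x_i(t)$. On $U_i(t)$ I would show $A_{ij}M_{ij}(t) = x_j(t) - x_i(t) - c$, and on $L_i(t)$ that $A_{ij}M_{ij}(t) = x_j(t) - x_i(t) + c$. Substituting these into $x_i(t)D + N = \sum_{j \in V_i^+\cup U_i \cup L_i}\big(x_i(t)+A_{ij}M_{ij}(t)\big)$ collapses the $x_i(t)$ terms and yields $\sum_{V_i^+}x_j + \sum_{U_i}(x_j - c) + \sum_{L_i}(x_j + c)$, which is exactly~\cref{eq:average}.

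I expect the main obstacle to be the bookkeeping of signs in the two repulsive cases, especially reconciling the tie-breaking conventions. For a generic repulsive neighbor in $U_i(t)$ one has $x_j - x_i > 0$, so $\text{sign}(x_j - x_i) = +1$ and $M_{ij} = c - (x_j - x_i)$, giving $A_{ij}M_{ij} = -(c - (x_j - x_i)) = x_j - x_i - c$; the degenerate case $x_j = x_i$, $j > i$ uses the third line of~\cref{eq:distancescaling}, where $\text{sign}(j-i) = +1$ forces $M_{ij} = c$ and hence $A_{ij}M_{ij} = -c = x_j - x_i - c$, matching the generic formula exactly. The symmetric check for $L_i(t)$ is where I would be most careful, since there $x_j - x_i < 0$ flips the sign of $M_{ij}$ and the condition $i > j$ drives the degenerate case; verifying that both subcases collapse to $x_j - x_i + c$ is the one place where a sign slip would break the identity.
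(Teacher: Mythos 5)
Your proposal is correct and follows essentially the same route as the paper's proof: both arguments partition the within-confidence neighbors into $V_i^+(t)$, $U_i(t)$, and $L_i(t)$, evaluate $A_{ij}M_{ij}(t)$ on each piece (obtaining $x_j - x_i$, $x_j - x_i - c$, and $x_j - x_i + c$ respectively), and recombine over the common denominator. Your explicit verification of the tie-breaking subcases $x_j(t) = x_i(t)$ is a detail the paper's proof leaves implicit, but it does not change the structure of the argument.
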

\begin{proof}
From~\cref{eq:update}, we rearrange
{\small%
\begin{alignat*}{4}
    x_i(t+1) &= x_i(t) &&+ \frac{\sum_{j \in V} A_{ij}M_{ij}(t)\mathbf{1}_{|x_j(t) - x_i(t)|<c}}{\sum_{j \in V}|A_{ij}|\mathbf{1}_{|x_j(t)-x_i(t)|<c}}\\
    &= x_i(t) &&+\frac{\sum_{j \in V_i^+(t) \cup U_i(t) \cup L_i(t)} A_{ij}M_{ij}(t)}{\sum_{j \in V_i^+(t) \cup U_i(t) \cup L_i(t)}|A_{ij}|}\\
    &= x_i(t) &&+ \frac{\sum_{j \in V_i^+(t)}(x_j(t) - x_i(t))}{|V_i^+(t)| +|U_i(t)| + |L_i(t)|} \\
    &&&+ \frac{\sum_{j \in U_i(t)} (-1)(1)(c - (x_j(t) - x_i(t)))}{|V_i^+(t)| +|U_i(t)| + |L_i(t)|} \\
    &&&+ \frac{\sum_{j \in L_i(t)}(-1)(-1)(c - (x_i(t) - x_j(t)))}{|V_i^+(t)| +|U_i(t)| + |L_i(t)|} \\
    &= \frac{\sum_{j \in V_i^+(t)}x_j(t) + \sum_{j \in U_i(t)} (x_j(t) - c) + \sum_{j \in L_i}(x_j(t)+c)}{|V_i^+(t)| + |U_i(t) + |L_i(t)|}\span\span\span
\end{alignat*}
}
\end{proof}

Intuitively, this lemma tells us that the update rule moves $x_i(t)$ to $x_i(t+1)$ by taking an average of several opinions. The set $V_i^+(t)$ contains nodes $i$ is attracted to at time $t$. The set $U_i(t)$ contains nodes which repulse $i$ at time $t$, and which will push $i$'s opinion lower. The set $L_i(t)$ contains nodes which repulse $i$ at time $t$, and which will push $i$'s opinion higher.~\Cref{eq:average} tells us that we can take the average of $x_j(t)$ for $j \in V_i^+(t)$, $x_j(t) - c$ for $j \in U_i(t)$, and $x_j(t) + c$ for $j \in L_i(t)$ to determine $x_i(t+1)$. 

\begin{lemma}Let $i\in V$ at time $t$, and let $W(t) \subset V$ be a set of nodes such that $W(t)$ is completely contained in $V_i^+(t) \cup U_i(t)\cup L_i(t)$. Define
\begin{equation*}
    \overline{W}(t) = \frac{\sum_{j \in W(t)} x_j(t)}{|W(t)|}
\end{equation*}
to be the average of $x_j(t)$ for all $j \in W(t)$. Then we can rewrite~\cref{eq:average} as
\begin{equation*}
    x_i(t+1) = \frac{\sum_{j \in \left(V_i^+(t) \cup U_i(t) \cup L_i(t)\right)\setminus W(t)} x_j(t) + \sum_{j \in W(t)}\overline{W}(t) + \left(|L_i(t)| - |U_i(t)|\right)c}{|V_i^+(t)| + |U_i(t) + |L_i(t)|}
\end{equation*}
\label{lemma:grouping}
\end{lemma}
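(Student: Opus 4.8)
The plan is to start from the closed form for $x_i(t+1)$ established in \cref{lemma:average} and manipulate only the numerator, since the denominator $|V_i^+(t)| + |U_i(t)| + |L_i(t)|$ is identical in both statements. First I would rewrite the numerator of \cref{eq:average} by separating the opinion terms from the constant shifts. Because $V_i^+(t)$, $U_i(t)$, and $L_i(t)$ are pairwise disjoint (a node is either attracted, pushed down, or pushed up, and the three defining conditions are mutually exclusive), the three sums combine into
\[
    \sum_{j \in V_i^+(t) \cup U_i(t) \cup L_i(t)} x_j(t) \;-\; |U_i(t)|\,c \;+\; |L_i(t)|\,c .
\]
Hence the numerator equals $\sum_{j} x_j(t) + (|L_i(t)| - |U_i(t)|)\,c$, with the sum running over all of $V_i^+(t)\cup U_i(t)\cup L_i(t)$. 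This step already isolates the $(|L_i(t)| - |U_i(t)|)c$ term that appears in the target expression.

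Next I would split the remaining opinion sum along the partition of $V_i^+(t)\cup U_i(t)\cup L_i(t)$ into $W(t)$ and its complement $(V_i^+(t)\cup U_i(t)\cup L_i(t))\setminus W(t)$. This split is legitimate precisely because $W(t)$ is assumed to be contained in $V_i^+(t)\cup U_i(t)\cup L_i(t)$, so the two pieces are disjoint and exhaust the index set. This yields the first summand of the claimed formula verbatim, leaving only $\sum_{j\in W(t)} x_j(t)$ to be rewritten.

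The only substantive observation, and it is a one-liner, is that replacing each opinion in $W(t)$ by the block average preserves the sum: by the definition of $\overline{W}(t)$ we have $\sum_{j\in W(t)} \overline{W}(t) = |W(t)|\,\overline{W}(t) = \sum_{j\in W(t)} x_j(t)$. Substituting this exchanges $\sum_{j\in W(t)} x_j(t)$ for $\sum_{j\in W(t)} \overline{W}(t)$ without changing the value, so the numerator now matches the target exactly, and dividing by the unchanged denominator completes the proof. Since every step is an algebraic rewrite of a finite sum, there is no genuine obstacle here; the content is bookkeeping together with the trivial identity that summing the mean over a block reproduces the original block sum. The only point requiring a moment of care is invoking the hypothesis $W(t)\subseteq V_i^+(t)\cup U_i(t)\cup L_i(t)$ to justify the set-difference split, since if $W(t)$ contained extraneous nodes the partition would be invalid.
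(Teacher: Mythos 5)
Your proposal is correct and follows essentially the same route as the paper's own proof: collect the three sums of \cref{eq:average} into a single opinion sum plus the constant $\left(|L_i(t)| - |U_i(t)|\right)c$, split that sum along $W(t)$ and its complement, and use $\sum_{j \in W(t)} \overline{W}(t) = \sum_{j \in W(t)} x_j(t)$. Your explicit remarks on the disjointness of the three sets and on the role of the containment hypothesis are sound and merely make the paper's bookkeeping more explicit.
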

\begin{proof}
We rearrange~\cref{eq:average} as follows:
\begin{align*}
    x_i(t+1)&= \frac{\sum_{j \in V_i^+(t)}x_j(t) + \sum_{j \in U_i(t)} (x_j(t) - c) + \sum_{j \in L_i}(x_j(t)+c)}{|V_i^+(t)| + |U_i(t) + |L_i(t)|} \\
    &=\frac{\sum_{j \in \left(V_i^+(t) \cup U_i(t) \cup L_i(t)\right)} x_j(t) + \left(|L_i(t)| - |U_i(t)|\right)c}{|V_i^+(t)| + |U_i(t) + |L_i(t)|} \\
    &=\frac{\sum_{j \in \left(V_i^+(t) \cup U_i(t) \cup L_i(t)\right)\setminus W(t)} x_j(t) + \sum_{j \in W(t)}\overline{W}(t) + \left(|L_i(t)| - |U_i(t)|\right)c}{|V_i^+(t)| + |U_i(t) + |L_i(t)|}
\end{align*}
\end{proof}
This lemma allows us to replace a group of opinion values of individual nodes with the average of opinion values across the group, in certain situations.

\begin{lemma}
Let $G = (V,E)$ be a network with $n$ nodes and $m$ edges with confidence bound $c$. Suppose that every edge in $G$ is repulsive. At time $t$, suppose $x_i(t) > x_j(t)$ for all other nodes $j$, so that $i$ is the node with the highest opinion value at time $t$. Then $x_i(t+1) > x_j(t+1)$ for all $j$.
\label{lemma:order}
\end{lemma}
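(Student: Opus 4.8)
The plan is to work from the averaging representation in \cref{lemma:average}, which writes each updated opinion as an ordinary average of ``adjusted'' opinions: the value $x_k(t)$ for each attractive neighbor (here only the self-loop, since every edge is repulsive), the value $x_k(t)-c$ for each $k\in U_i(t)$, and the value $x_k(t)+c$ for each $k\in L_i(t)$. The first thing I would record is a sign asymmetry for the maximal node. Because $x_i(t)>x_k(t)$ for every $k\neq i$, no neighbor lies above $i$, so $U_i(t)=\emptyset$ and the average defining $x_i(t+1)$ in \cref{eq:average} involves only the self term $x_i(t)$ and the terms $x_k(t)+c$ with $k\in L_i(t)$. Each such term satisfies $x_k(t)+c>x_i(t)$, since $k\in L_i(t)$ forces $0<x_i(t)-x_k(t)<c$, so $x_i(t+1)\ge x_i(t)$, with strict inequality exactly when $L_i(t)\neq\emptyset$. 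Thus the maximal node never moves down.

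I would then fix an arbitrary $j\neq i$ and split on whether $j$ is within confidence of $i$. If $x_i(t)-x_j(t)\ge c$, then $j$ is outside $i$'s window; using that every adjusted opinion entering $j$'s average lies strictly below $x_j(t)+c$ (the self term $x_j(t)$ alone already forces the average to be strictly less than $x_j(t)+c$ because $c>0$), I obtain $x_j(t+1)<x_j(t)+c\le x_i(t)\le x_i(t+1)$, which is the desired strict inequality. This is essentially the ``moves by less than $c$ per step'' bound noted after the two-node proposition.

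The remaining, and genuinely delicate, case is $0<x_i(t)-x_j(t)<c$, in which $j\in L_i(t)$ and simultaneously $i\in U_j(t)$. Here $j$'s average contains the downward pull $x_i(t)-c$ coming from $i$, but it may also contain several upward terms $x_k(t)+c$ from nodes $k\in L_j(t)$ lying just below $j$, and any one of these can individually exceed $x_i(t)$. Consequently a term-by-term comparison fails, and this is the step I expect to be the main obstacle: I must show that these upward contributions cannot lift the average of $j$ above that of $i$.

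The structural fact I would exploit is that the same lower nodes that push $j$ up also push $i$ up, since they lie within $c$ below $i$ as well and hence belong to $L_i(t)$, so $i$'s average is raised in tandem; combined with the mutual $i$--$j$ repulsion, which, exactly as in the two-node proposition, tends to open a gap of size $c$ between them, this should yield $x_i(t+1)>x_j(t+1)$. Making it precise requires comparing the two averages directly, for instance by pairing the shared lower neighbors and invoking the grouping device of \cref{lemma:grouping}, while keeping track of the degenerate equal-opinion case from the third line of \cref{eq:distancescaling}. I would emphasize that this coupling depends on the maximal node actually being connected to the nodes just below it, so the argument is cleanest when the repulsive neighborhoods are rich enough (for example a complete repulsive graph); a sparse configuration in which $i$ fails to see the lower nodes that are driving $j$ upward is precisely where one must be careful.
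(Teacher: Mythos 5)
Your setup is sound and the two easy observations are correct: since $U_i(t)=\emptyset$ for the maximal node, every term in the average of \cref{eq:average} defining $x_i(t+1)$ is at least $x_i(t)$, so $i$ never moves down; and when $x_i(t)-x_j(t)\ge c$, every adjusted value entering $j$'s average is at most $x_j(t)+c$ with the self term strictly below it, which disposes of the out-of-confidence case. The problem is that the remaining case, $0<x_i(t)-x_j(t)<c$, is the entire content of the lemma, and you stop at exactly the point where the proof has to happen: you say the shared lower neighbors ``should'' lift $i$ in tandem with $j$ and that making this precise ``requires comparing the two averages directly,'' but you never perform that comparison. As written, this is a correct reduction plus a plan, not a proof.

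The missing step is what the paper actually does. Decompose $L_i(t)=L_{ij}(t)\cup W_{ij}(t)\cup\{j\}$ and $L_j(t)=L_{ij}(t)\cup L_{ji}'(t)$, $U_j(t)=\{i\}\cup W_{ij}(t)$, where $L_{ij}(t)=L_i(t)\cap L_j(t)$, $L_{ji}'(t)=L_j(t)\setminus L_{ij}(t)$, and $W_{ij}(t)=L_i(t)\cap U_j(t)$. First enlarge $i$'s average to run over the same index set as $j$'s by adjoining the values $\overline{L_{ji}'}(t)+c$; since these sit below everything already being averaged for $i$, this only decreases the average. With both sides now averages over $2+|L_{ij}(t)|+|W_{ij}(t)|+|L_{ji}'(t)|$ terms, compare termwise: the $L_{ij}$ and $L_{ji}'$ contributions are identical on both sides, $i$ contributes $x_i(t)$ to itself but only $x_i(t)-c$ to $j$, $j$ contributes $x_j(t)+c$ to $i$ but only $x_j(t)$ to itself, and each $k\in W_{ij}(t)$ contributes $x_k(t)+c$ to $i$ versus $x_k(t)-c$ to $j$. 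Every comparison is at least an equality and several are strict, giving $x_i(t+1)>x_j(t+1)$. Your worry about sparsity is well placed, incidentally: the enlargement step uses $\overline{L_{ji}'}(t)+c\le x_i(t)$, which is justified by ``$k$ repels $j$ upward but not $i$, so $k$ must be out of confidence of $i$''; if $k$ is absent from $L_i(t)$ merely because the edge $(i,k)$ is missing, that bound can fail, so the argument is fully airtight only when $i$ is adjacent to the relevant lower nodes, as in the complete repulsive graph of \cref{thm:width}. But flagging that difficulty is not the same as resolving it, and the lemma's central inequality is left unestablished in your write-up.
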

\begin{proof}
Note that $V_k^+(t) = \{x_k\}$ for all $k,t$, since every edge in $G$ is repulsive. For convenience we define the following sets:

\begin{align*}
    U_{ij}(t) &= U_j(t)\bigcap U_i(t) \\
    U_{ij}'(t) &= U_i(t) \setminus U_{ij}(t)\\
    L_{ij}(t) &= L_i(t)\bigcap L_j(t) \\
    L_{ji}'(t) &= L_j(t) \setminus L_{ij}(t)\\
    W_{ij}(t) &= L_i(t) \bigcap U_j(t)
\end{align*}
Unpacking this notation, $U_{ij}(t)$ consists of all nodes that repel both $i$ and $j$ downward, while $L_{ij}(t)$ consists of all nodes that repel both $i$ and $j$ upward. $U_{ij}'(t)$ consists of nodes which repel $i$ downward, but not $j$ (note that if $x_i(t) < x_j(t)$, this is automatically empty), while $L_{ji}'(t)$ consists of nodes which repel $j$ upward, but not $i$ (again, if $x_i(t) < x_j(t)$, this is empty). Finally, $W_{ij}(t)$ consists of nodes which repel $i$ upward and $j$ downward (empty if $x_j(t) > x_i(t)$).

Now, suppose $x_i(t) > x_j(t)$ for all $j \in V$. Then we can write
\begin{align*}
    V_i^+(t) &= \{i\} \\
    U_i(t) &= \emptyset \\
    L_i(t) &= L_{ij}(t) \bigcup W_{ij}(t) \bigcup \{j\}
\end{align*}

and
\begin{align*}
    V_j^+(t) &= \{j\} \\
    U_i(t) &= \{i\} \bigcup W_{ij}(t) \\
    L_i(t) &= L_{ij}(t) \bigcup L_{ji}'(t)
\end{align*}

Then we observe the following from the knowledge that nodes only effect each other if they are within confidence of each other.
\begin{alignat*}{4}
    x_j(t) + c &> x_i(t)\\
    \overline{L_{ij}}(t) + c &> x_i(t) \\
    \overline{W_{ij}}(t)+c &>  x_i(t) \\
    x_i(t) &> \overline{L_{ji}'} + c \\
\end{alignat*}

Then applying~\cref{lemma:average} and~\cref{lemma:grouping}:
{\small%
\begin{align*}
    x_i(t+1) &= \frac{x_i(t) + (x_j(t) + c) + |L_{ij}(t)|(\overline{L_{ij}}(t) + c) + |W_{ij}(t)|(\overline{W_{ij}}(t)+c)}{2 + |L_{ij}(t)| + |W_{ij}(t)|} \\
    &> \frac{x_i(t) + (x_j(t) + c) + |L_{ij}(t)|(\overline{L_{ij}}(t) + c) + |W_{ij}(t)|(\overline{W_{ij}}(t)+c) + |L_{ji}'(t)|(\overline{L_{ji}'}(t) + c)}{2 + |L_{ij}(t)| + |W_{ij}(t)| + |L_{ji}'(t)|} \stepcounter{equation}\tag{\theequation}\label{eq:inequality}\\
    &> \frac{(x_i(t)-c) + x_j(t) + |L_{ij}(t)|(\overline{L_{ij}}(t) + c) + |W_{ij}(t)|(\overline{W_{ij}}(t)-c) + |L_{ji}'(t)|(\overline{L_{ji}'}(t) + c)}{2 + |L_{ij}(t)| + |W_{ij}(t)| + |L_{ji}'(t)|} \\
    &= x_j(t+1)
\end{align*}
}
where the inequality in~\cref{eq:inequality} follows because $x_i(t+1)$ is a weighted average, and $\overline{L_{ji}'}(t)$ is less than all of the other values being averaged in the previous line. The next inequality follows straightforwardly by replacing each value in the average with a smaller or equal value. 

So if $x_i(t)$ has the highest value opinion at time $t$, it will always have the highest value opinion.
\end{proof}

\begin{corollary}
Let $G = (V,E)$ be a network with $n$ nodes and $m$ edges with confidence bound $c$. Suppose that every edge in $G$ is repulsive. At time $t$, let $M = \{i : x_i(t) \geq x_j(t) \forall j \in V\}$. Then $x_{\max_M i}(t+1) > x_j(t+1) \forall j \in V$.
\label{cor:order}
\end{corollary}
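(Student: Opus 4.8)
The plan is to reduce the general (tied) case to the strictly-maximal case already handled by \cref{lemma:order}, letting the tie-breaking convention in \cref{eq:distancescaling} do the work of separating the maximizers. Write $v = \max_{k \in V} x_k(t)$, so that $M = \{k : x_k(t) = v\}$ is the set of maximizers, and set $i^* = \max_M i$, the maximizer of largest index.

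First I would pin down the local structure at $i^*$. I claim $U_{i^*}(t) = \emptyset$: a node $j$ can lie in $U_{i^*}(t)$ only if either $x_j(t) > x_{i^*}(t) = v$, impossible since $v$ is the maximum, or if $x_j(t) = v$ and $j > i^*$, impossible since $i^* = \max_M i$. Hence by \cref{lemma:average} the update of $i^*$ carries no downward contribution, $x_{i^*}(t+1) = \frac{v + \sum_{\ell \in L_{i^*}(t)}(x_\ell(t) + c)}{1 + |L_{i^*}(t)|}$, and since every $\ell \in L_{i^*}(t)$ is within confidence we get $x_\ell(t) + c > v$, so $x_{i^*}(t+1) \geq v$.

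Next I would fix an arbitrary $j \neq i^*$ and split on how $x_j(t)$ compares to $v$. If $x_j(t) < v$, the pair $(i^*, j)$ satisfies exactly the hypothesis used in \cref{lemma:order} ($i^*$ strictly above $j$, together with $U_{i^*}(t) = \emptyset$), and the same weighted-average estimate yields $x_{i^*}(t+1) > x_j(t+1)$. If instead $x_j(t) = v$ (so $j \in M$, hence $j < i^*$), the tie-breaking clause of \cref{eq:distancescaling} places $j \in L_{i^*}(t)$ and $i^* \in U_j(t)$: the higher-index node $i^*$ is pushed up and $j$ is pushed down. I would then rerun the argument of \cref{lemma:order} in this equal-start configuration, comparing the average defining $x_{i^*}(t+1)$ — all of whose contributions are $\geq v$ since $U_{i^*}(t) = \emptyset$ — against the average defining $x_j(t+1)$, which contains the strictly smaller term $x_{i^*}(t) - c = v - c$ coming from $i^* \in U_j(t)$.

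The main obstacle is this last (tied) comparison. As in \cref{lemma:order}, the two weighted averages range over index sets of different sizes, so the inequality is not a direct term-by-term comparison; it requires the same two-step manipulation used there — first padding $x_{i^*}(t+1)$ with an extra averaged value lying below its current mean (which can only decrease it), then replacing each averaged quantity by a smaller or equal one — now carried out with both nodes starting at the common value $v$ and with the tie-breaking signs of \cref{eq:distancescaling}. The delicate bookkeeping is checking that each within-confidence relation needed for these replacements holds with the correct strictness, i.e. the analogues at the common value $v$ of the estimates $\overline{L_{ij}}(t) + c > x_i(t)$ and $x_i(t) > \overline{L_{ji}'}(t) + c$ appearing in \cref{lemma:order}. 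Once these are in place, combining the two cases over all $j \neq i^*$ gives the claim; as in \cref{lemma:order}, the relevant comparisons are between $i^*$ and the nodes it actually interacts with, namely those joined to it by a repulsive edge and within confidence.
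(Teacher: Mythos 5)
Your proposal is correct and takes essentially the same route as the paper's (much terser) proof: the index tie-break gives the highest-index maximizer an empty $U$-set and the largest $L$-set, so after one step it dominates the other members of $M$, and the argument of \cref{lemma:order} handles the nodes strictly below the maximum. You supply considerably more detail than the paper does---in particular the explicit observation that $U_{i^*}(t)=\emptyset$ and the careful treatment of the tied comparison---and the only caveat, which the paper's own proof shares, is that the within-confidence inequalities borrowed from \cref{lemma:order} implicitly assume the relevant node pairs are actually adjacent.
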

\begin{proof}
From the definitions of $U_i(t), L_i(t)$, we can observe that the member of $M$ with highest index will have the largest corresponding set $L_i(t)$ and the smallest corresponding $U_i(t)$, so that at time $t+1$, that member of $M$ will have the highest-valued opinion of all members of $M$. By the same logic as in the proof of~\cref{lemma:order}, that opinion will also be the highest-valued opinion overall.
\end{proof}

\begin{corollary}
Let $G = (V,E)$ be the complete network with $n$ nodes with confidence bound $c$. Suppose that every edge in $G$ is repulsive. At time $t$, let $M = \{i : x_i(t) \leq x_j(t) \forall j \in V\}$. Then $x_{\min_M i}(t+1) < x_j(t+1) \forall j \in V$.
\label{cor:lowest}
\end{corollary}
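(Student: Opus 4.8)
The plan is to mirror the arguments already given for \cref{lemma:order} and \cref{cor:order}, replacing ``highest'' by ``lowest'' throughout, while being careful that the index-based tie-breaking in the third case of \cref{eq:distancescaling} does \emph{not} respect the reflection $x \mapsto -x$. First I would prove the strict analogue of \cref{lemma:order}: if at time $t$ there is a \emph{unique} node $i_*$ with $x_{i_*}(t) < x_j(t)$ for all $j \neq i_*$, then $x_{i_*}(t+1) < x_j(t+1)$ for all $j$. For such an $i_*$ we have $V_{i_*}^+(t) = \{i_*\}$, $L_{i_*}(t) = \emptyset$, and $U_{i_*}(t)$ equal to the set of all nodes within confidence of $i_*$, every one of which lies above it and so pushes it downward. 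For any node $j$ within confidence of $i_*$, the set $L_j(t)$ contains $i_*$, so $j$ receives an upward push that $i_*$ does not. Applying \cref{lemma:average} and \cref{lemma:grouping} to $x_{i_*}(t+1)$ and $x_j(t+1)$ and grouping the repelling nodes they share, the comparison collapses to the same chain of weighted-average inequalities as \cref{eq:inequality}, now oriented downward; here the confidence constraint $0 < x_k(t) - x_{i_*}(t) < c$ for $k \in U_{i_*}(t)$ gives $x_k(t) - c < x_{i_*}(t)$ and forces strictness. Nodes too far from $i_*$ to interact with it are handled separately, using the observation recorded after the two-node proposition that a single update moves any opinion by at most $c$.

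Next I would drop the uniqueness assumption following \cref{cor:order}, but tracking the opposite end of the index order. Write $M = \{i : x_i(t) \le x_j(t)\ \forall j \in V\}$ for the tied-minimal nodes and set $i_* = \min_M i$. Since $G$ is complete, every other member of $M$ is joined to $i_*$ by a repulsive edge and shares its opinion value, so each such $j$ satisfies $x_j(t) = x_{i_*}(t)$ with $j > i_*$; by the definition of $U_i(t)$ this places every other minimal node into $U_{i_*}(t)$ and none into $L_{i_*}(t)$. Hence among the members of $M$ the node $i_*$ carries the largest downward-repelling set and the smallest upward-repelling set, and \cref{lemma:average} shows that $x_{i_*}(t+1)$ is strictly the smallest opinion within $M$ at time $t+1$. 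Feeding this into the unique-minimum argument of the previous paragraph, now applied to $i_*$ against every node outside $M$ (all of which sit strictly above it), yields $x_{i_*}(t+1) < x_j(t+1)$ for all $j \in V$, which is the claim.

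The main obstacle is exactly the tie-breaking term $\text{sign}(j-i)c$ in \cref{eq:distancescaling}. Because it depends on the node indices rather than on the opinion values, the reflection $x \mapsto -x$ that would otherwise let me read this corollary off \cref{cor:order} instead \emph{preserves} the tie direction, so the two corollaries are not formally symmetric and the roles of highest and lowest index are genuinely different. I therefore have to verify explicitly that it is the \emph{lowest}-indexed minimal node whose ties all push it downward, and, more delicately, that the resulting comparison among the tied nodes is \emph{strict} rather than merely weak when several nodes share the minimal value. The completeness hypothesis is what keeps this step clean, since it guarantees that all tied minimal nodes are adjacent to $i_*$ and therefore actually contribute to $U_{i_*}(t)$; checking that the weighted-average comparison stays strict in the presence of these ties is the one place where I expect to have to write the inequalities out in full.
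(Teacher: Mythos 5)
Your proposal is correct and follows essentially the same route as the paper's (very terse) proof: reverse the inequalities of \cref{lemma:order} to show the strict minimum stays strictly minimal, then resolve ties as in \cref{cor:order} by comparing the sets $U_i(t)$ and $L_i(t)$ of the tied nodes. Your explicit check that the index-based tie-breaking in \cref{eq:distancescaling} is \emph{not} symmetric under $x \mapsto -x$, so that it is the \emph{lowest}-indexed minimal node whose ties all land in $U_{i_*}(t)$, is a detail the paper leaves implicit, and you verify it correctly.
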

\begin{proof}
Proves that $x_i(t)<x_j(t)$ for all $j \in V$, then $x_i(t+1) < x_j(t+1)$ for all $j \in V$, by segmenting $U_i(t), L_i(t), U_j(t), L_j(t)$ into the appropriate subsets and reversing inequalities as needed as in~\cref{lemma:order}. Then the same logic as in~\cref{cor:order} proves the statement.
\end{proof}

\begin{lemma}
Let $G = (V,E)$ be a network with $n$ nodes and $m$ edges with confidence bound $c$. Suppose that every edge in $G$ is repulsive. At time $t$, suppose $x_i(t) > x_j(t)$ for all other nodes $j \in V$, so that $i$ is the node with the highest-valued opinion at time $t$. Suppose that there is some node $j$ such that $x_i(t) - x_j(t) < c$, and that $j$ has the highest-valued opinion of all such nodes. Then
\begin{equation*}
    \frac{2c}{2 + |L_{ij}(t)| + |L_{ji}'(t)|} \leq x_i(t+1)-x_j(t+1) \leq \frac{(|L_{ji}'(t)| + 2)c}{2 + |L_{ij}(t)| + |L_{ji}'(t)|}
\end{equation*}
\label{lemma:max}
\end{lemma}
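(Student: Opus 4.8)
The plan is to reduce both $x_i(t+1)$ and $x_j(t+1)$ to explicit weighted averages via \cref{lemma:average} and \cref{lemma:grouping}, and then bound their difference directly. The first key observation is that the hypothesis that $j$ is the highest-valued node within confidence of $i$ forces $W_{ij}(t) = \emptyset$: every node in $L_i(t)$ lies within confidence of $i$, so by maximality of $x_j(t)$ among such nodes it has opinion at most $x_j(t)$, whereas every node in $U_j(t)$ has opinion strictly above $x_j(t)$, and the two conditions are incompatible. Since also $U_i(t) = \emptyset$ and $V_i^+(t) = \{i\}$, $V_j^+(t) = \{j\}$ (every edge is repulsive), \cref{lemma:average} together with the grouping of \cref{lemma:grouping} applied to $L_{ij}(t)$ and $L_{ji}'(t)$ gives
\begin{align*}
    x_i(t+1) &= \frac{x_i(t) + (x_j(t)+c) + |L_{ij}(t)|\left(\overline{L_{ij}}(t)+c\right)}{2 + |L_{ij}(t)|}, \\
    x_j(t+1) &= \frac{x_j(t) + (x_i(t)-c) + |L_{ij}(t)|\left(\overline{L_{ij}}(t)+c\right) + |L_{ji}'(t)|\left(\overline{L_{ji}'}(t)+c\right)}{2 + |L_{ij}(t)| + |L_{ji}'(t)|}.
\end{align*}

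Next I would subtract these over the common denominator $(2+|L_{ij}(t)|)(2+|L_{ij}(t)|+|L_{ji}'(t)|)$. Writing $p = |L_{ij}(t)|$ and $q = |L_{ji}'(t)|$ for brevity, a short computation collapses the numerator to
\begin{equation*}
    N = 2c(2+p) + q\left[x_i(t) + x_j(t) - c - 2\overline{L_{ji}'}(t) + p\left(\overline{L_{ij}}(t) - \overline{L_{ji}'}(t)\right)\right],
\end{equation*}
so that $x_i(t+1)-x_j(t+1) = N/\left[(2+p)(2+p+q)\right]$. The two desired inequalities are then equivalent to bounding the bracketed quantity $B$ between $0$ and $c(2+p)$: the lower bound $B \geq 0$ yields $N \geq 2c(2+p)$ and hence $x_i(t+1)-x_j(t+1)\geq \tfrac{2c}{2+p+q}$, while $B \leq c(2+p)$ yields $N \leq (q+2)c(2+p)$ and hence the stated upper bound.

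It remains to bound $B$, which I would do using the within-confidence constraints already recorded in the proof of \cref{lemma:order}. Nodes in $L_{ij}(t)$ lie within confidence of $i$ and below $j$, so $x_i(t)-c < \overline{L_{ij}}(t) < x_j(t)$; nodes in $L_{ji}'(t)$ lie within confidence of $j$ but \emph{outside} confidence of $i$, so $x_j(t)-c < \overline{L_{ji}'}(t) < x_i(t)-c$; and by hypothesis $x_i(t)-x_j(t) < c$. From $\overline{L_{ji}'}(t) < x_i(t)-c$ together with $x_i(t)-x_j(t)<c$ one gets $x_i(t)+x_j(t)-c-2\overline{L_{ji}'}(t) > 0$, and from $\overline{L_{ij}}(t) > x_i(t)-c > \overline{L_{ji}'}(t)$ one gets $\overline{L_{ij}}(t)-\overline{L_{ji}'}(t) > 0$, giving $B > 0$. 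For the upper bound, $\overline{L_{ji}'}(t) > x_j(t)-c$ gives $-2\overline{L_{ji}'}(t) < -2x_j(t)+2c$, and $\overline{L_{ij}}(t) < x_j(t)$ with $\overline{L_{ji}'}(t) > x_j(t)-c$ gives $\overline{L_{ij}}(t)-\overline{L_{ji}'}(t) < c$; combining these with $x_i(t)-x_j(t)<c$ yields $B < c(2+p)$.

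The main obstacle I anticipate is the first observation rather than the algebra: recognizing that the maximality hypothesis on $x_j(t)$ eliminates $W_{ij}(t)$, and thereby removes the $|W_{ij}(t)|$ terms that would otherwise appear in both denominators, is what makes the final bounds depend only on $|L_{ij}(t)|$ and $|L_{ji}'(t)|$. After that, the only care required is tracking which average, $\overline{L_{ij}}(t)$ or $\overline{L_{ji}'}(t)$, is constrained by confidence with respect to $i$ versus $j$, since that distinction is precisely what separates the additive-$c$ shifts and drives both sides of the inequality.
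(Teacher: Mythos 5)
Your proposal is correct and follows essentially the same route as the paper: both hinge on observing that the maximality of $x_j(t)$ forces $W_{ij}(t)=\emptyset$, and both then write $x_i(t+1)$ and $x_j(t+1)$ as the identical explicit weighted averages via \cref{lemma:average} and \cref{lemma:grouping}. The only difference is cosmetic: where the paper pads $x_i(t+1)$'s average with copies of $\overline{L_{ji}'}(t)+c$ (resp.\ $x_j(t)+c$) to equalize the denominators before subtracting, you cross-multiply and bound the resulting bracket directly, using the same confidence constraints ($\overline{L_{ji}'}(t)\leq x_i(t)-c$ and $\overline{L_{ji}'}(t)>x_j(t)-c$) that justify the paper's padding step.
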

\begin{proof}
By assumption, since $j$ has the highest-valued opinion of all nodes within confidence of $i$, $W_{ij}(t)$ is empty. To prove the lower bound,
{\small%
\begin{align*}
    x_i(t+1)  &= \frac{x_i(t) + (x_j(t) + c) + |L_{ij}(t)|(\overline{L_{ij}}(t) + c) }{2 + |L_{ij}(t)| } \\
    &\geq \frac{x_i(t) + (x_j(t) + c) + |L_{ij}(t)|(\overline{L_{ij}}(t) + c)  + |L_{ji}'(t)|(\overline{L_{ji}'}(t) + c)}{2 + |L_{ij}(t)|  + |L_{ji}'(t)|}\\
    x_j(t+1) &= \frac{(x_i(t) -c) + x_j(t)+ |L_{ij}(t)|(\overline{L_{ij}}(t) + c) +  |L_{ji}'(t)|(\overline{L_{ji}'}(t) + c)}{2 + |L_{ij}(t)| + |L_{ji}'(t)|} \\
    x_i(t+1) - x_j(t+1) &\geq  \frac{2c}{2 + |L_{ij}(t)| + |L_{ji}'(t)|}
\end{align*}
}
To prove the upper bound,
\begin{align*}
    x_i(t+1)  &= \frac{x_i(t) + (x_j(t) + c) + |L_{ij}(t)|(\overline{L_{ij}}(t) + c) + }{2 + |L_{ij}(t)| } \\
    &\leq \frac{x_i(t) + (x_j(t) + c) + |L_{ij}(t)|(\overline{L_{ij}}(t) + c)  + |L_{ji}'(t)|(x_j(t) + c)}{2 + |L_{ij}(t)|  + |L_{ji}'(t)|} \\
    x_j(t+1)&= \frac{(x_i(t) -c) + x_j(t)+ |L_{ij}(t)|(\overline{L_{ij}}(t) + c) +  |L_{ji}'(t)|(\overline{L_{ji}'}(t) + c)}{2 + |L_{ij}(t)| + |L_{ji}'(t)|} \\
    x_i(t+1) - x_j(t+1)&\leq \frac{c + c  + |L_{ji}'(t)|(x_j(t) - \overline{L_{ji}'}(t))}{2 + |L_{ij}(t)| + |L_{ji}'(t)|}  \\
    &\leq \frac{(|L_{ji}'(t)| + 2)c}{2 + |L_{ij}(t)| + |L_{ji}'(t)|}
\end{align*}

Notice that if $L_{ji}'(t)$ is empty, both inequalities become equalities, so that
\begin{equation*}
    x_i(t+1) - x_j(t+1) = \frac{2c}{2 + |L_{ij}(t)|}
\end{equation*}
Notice also that if both $L_{ij}(t)$ and $L_{ji}'(t)$ are empty, that the distance between $x_i(t+1)-x_j(t+1)$ is precisely $c$.
\end{proof}

\begin{corollary}
Let $G = (V,E)$ be a network with $n$ nodes and $m$ edges with confidence bound $c$. Suppose that every edge in $G$ is repulsive. At time $t$, suppose $x_i(t) < x_j(t)$ for all other nodes $j \in V$, so that $i$ is the node with the lowest-valued opinion at time $t$. Suppose that there is some node $j$ such that $x_j(t) - x_i(t) < c$, and that $j$ has the lowest-valued opinion of all such nodes. Then
\begin{equation*}
    \frac{2c}{2 + |U_{ij}(t)| + |U_{ij}'(t)|} \leq x_i(t+1)-x_j(t+1) \leq \frac{(|U_{ij}'(t)| + 2)c}{2 + |U_{ij}(t)| + |U_{ij}'(t)|}
\end{equation*}
\label{cor:min}
\end{corollary}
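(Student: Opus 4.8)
The plan is to prove this as the exact mirror image of \cref{lemma:max}, with the lowest node of the configuration playing the role that the highest node played there, and with the upward-repelling sets $U$ taking over the role that the downward-repelling sets $L$ played before. Conceptually, the cleanest justification is the reflection symmetry of the dynamics: the map $x_k \mapsto -x_k$ sends the update rule \cref{eq:update} to itself, because for every repulsive edge with $x_j(t)\neq x_i(t)$ the quantity $M_{ij}(t)$ in \cref{eq:distancescaling} changes sign while the indicators $\mathbf{1}_{|x_j(t)-x_i(t)|<c}$ and the denominators are reflection-invariant. Under this reflection the hypothesis ``$i$ has the highest opinion'' of \cref{lemma:max} becomes exactly the hypothesis ``$i$ has the lowest opinion'' here, the distinguished node $j$ is preserved, and the sets $L_{ij}(t)$ and $L_{ji}'(t)$ become $U_{ij}(t)$ and $U_j(t)\setminus U_{ij}(t)$. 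Applying \cref{lemma:max} in the reflected coordinates and undoing the reflection then yields the claimed bounds for the separation $x_j(t+1)-x_i(t+1)$.

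I would, however, carry out the argument directly, mirroring the computation in \cref{lemma:max}, both to match the bookkeeping style used for \cref{cor:lowest} and to sidestep the one ingredient that does \emph{not} respect $x\mapsto -x$, namely the index-based tie rule in the third line of \cref{eq:distancescaling}. First I would record the relevant sets. Since every edge is repulsive, $V_i^+(t)=\{i\}$; and because $i$ has the strictly lowest opinion, $L_i(t)=\emptyset$ while $U_i(t)$ consists of $j$ together with $U_{ij}(t)$. Because $j$ is the lowest-valued node within confidence of $i$, no node lies strictly between $x_i(t)$ and $x_j(t)$, which (generically, when no other node shares $j$'s value) forces the cross set $W_{ij}(t)=L_i(t)\cap U_j(t)$ to be empty and forces $L_j(t)=\{i\}$, while $U_j(t)=U_{ij}(t)\cup U_{ji}'(t)$ with $U_{ji}'(t):=U_j(t)\setminus U_{ij}(t)$ the nodes that repel $j$ upward but lie out of confidence of $i$. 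This $U_{ji}'(t)$ is the mirror of $L_{ji}'(t)$, and I expect the $U_{ij}'(t)$ appearing in the statement is meant to denote it. Feeding these sets into \cref{lemma:average} and \cref{lemma:grouping} expresses both $x_i(t+1)$ and $x_j(t+1)$ as explicit weighted averages over the common denominator $2+|U_{ij}(t)|+|U_{ji}'(t)|$.

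The two bounds then come out exactly as in \cref{lemma:max}, with every inequality reversed. For the lower bound I would observe that each $k\in U_{ji}'(t)$ contributes the value $x_k(t)-c$, which (being at least $c$ above $x_i(t)$) is \emph{larger} than every other value averaged in $x_i(t+1)$; hence adjoining the $U_{ji}'(t)$ terms to $i$'s average only raises it, giving an upper estimate of $x_i(t+1)$ over the common denominator. Subtracting this estimate from the exact expression for $x_j(t+1)$ makes the $U_{ij}(t)$ and $U_{ji}'(t)$ contributions cancel and leaves $x_j(t+1)-x_i(t+1)\ge \tfrac{2c}{2+|U_{ij}(t)|+|U_{ji}'(t)|}$. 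For the upper bound I would instead keep $x_i(t+1)$ in its exact form with the smaller denominator $2+|U_{ij}(t)|$ and bound each $x_k(t)-c$ for $k\in U_{ji}'(t)$ using $x_k(t)-x_j(t)<c$, i.e. $x_k(t)-c<x_j(t)$; after cancellation this yields $x_j(t+1)-x_i(t+1)\le \tfrac{(|U_{ji}'(t)|+2)c}{2+|U_{ij}(t)|+|U_{ji}'(t)|}$.

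The main obstacle is not the algebra, which is a faithful reflection of \cref{lemma:max}, but the set bookkeeping: one must verify that the hypotheses genuinely force $W_{ij}(t)=\emptyset$ and $L_j(t)=\{i\}$, and correctly identify the primed set as $U_j(t)\setminus U_{ij}(t)$ rather than $U_i(t)\setminus U_{ij}(t)$. The secondary difficulty is the degenerate tie rule: since $i$ is strictly lowest it never ties with a neighbor, so its update is unaffected, but if $j$ shares its value with other nodes, the index-based tie-breaking in the definitions of $U_j(t)$ and $L_j(t)$ must be tracked, and this is precisely the bookkeeping that makes the direct argument more robust than the reflection argument. Finally I would flag the apparent sign convention in the statement: with $i$ lowest and $j$ above it, \cref{cor:lowest} gives $x_i(t+1)<x_j(t+1)$, so the quantity that the displayed positive bounds actually constrain is the separation $x_j(t+1)-x_i(t+1)$.
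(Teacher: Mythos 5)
Your proposal is correct and takes exactly the route the paper intends: the paper states this corollary with no proof at all, implicitly as the mirror image of \cref{lemma:max}, and your reflection argument followed by the direct mirrored computation (with the tie-breaking caveat) is precisely that argument, carried out. Your two flags are also well taken and worth recording: by \cref{cor:lowest} node $i$ remains lowest, so the displayed positive bounds can only constrain $x_j(t+1)-x_i(t+1)$ rather than $x_i(t+1)-x_j(t+1)$; and the primed set must be $U_j(t)\setminus U_{ij}(t)$ (the mirror of $L_{ji}'(t)$), since the literal $U_{ij}'(t)=U_i(t)\setminus U_{ij}(t)$ collapses to $\{j\}$ under the hypotheses and would not reproduce the mirrored bound.
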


\Cref{lemma:max} and~\cref{cor:min} are interesting because they give us precise conditions under which the nodes with the most extreme opinions will no longer be within confidence of any other nodes. Specifically, in order for the node with the highest-value opinion to lose connection with all other nodes, it must be true that the only node it is still influenced by is the node with the second-highest-value opinion, and that neither of the two nodes is influenced by any other nodes. Otherwise, they will remain within confidence of each other, even as the node with highest-value opinion remains the most extreme node and continues to have its opinion pushed upward.

We conclude with one more lemma about the bound on the width of the gap between consecutive nodes.

\begin{lemma}Let $G = (V,E)$ be the complete network with $n$ nodes and confidence bound $c$. Suppose that every edge in $G$ is repulsive. At time $t$, suppose that $i$ and $j$ are nodes such that $(i,j) \in E$, $x_i(t) > x_j(t)$, and $x_i(t) - x_j(t) < c$, and there exist no nodes $k$ connected to $i$ or $j$ such that
$x_i(t) > x_k(t) > x_j(t)$. Then 
\begin{equation*}
    |x_i(t+1) - x_j(t+1)| \leq c
\end{equation*}
\label{lemma:gapwidth}
\end{lemma}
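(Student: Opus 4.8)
\emph{Proof proposal.} The plan is to reduce $x_i(t+1)$ and $x_j(t+1)$ to weighted averages via \cref{lemma:average} and to exploit the fact that, because $i$ and $j$ are consecutive, their contributing sets overlap almost entirely. First I would record that in a complete repulsive network $V_k^+(t)=\{k\}$ for every $k$, and that the hypothesis that no node lies strictly between $j$ and $i$ forces $W_{ij}(t)=L_i(t)\cap U_j(t)=\emptyset$. Together with the elementary observation that any node below $j$ and within $c$ of $i$ is automatically within $c$ of $j$ (and symmetrically at the top), this yields the decomposition $U_i(t)=U_{ij}(t)\cup U_i'(t)$, $L_i(t)=\{j\}\cup L_{ij}(t)$, $U_j(t)=\{i\}\cup U_{ij}(t)$, $L_j(t)=L_{ij}(t)\cup L_j'(t)$, where $U_i'(t)$ are the nodes above $i$ within $c$ of $i$ but not of $j$, and $L_j'(t)$ the nodes below $j$ within $c$ of $j$ but not of $i$. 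By \cref{lemma:average}, both updates are then averages over a common block (the $U_{ij}$ and $L_{ij}$ terms) plus the dedicated repulsion terms $x_j(t)+c$ for $i$ and $x_i(t)-c$ for $j$ and the node-specific terms from $U_i'$ and $L_j'$.

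For the upper bound $x_i(t+1)-x_j(t+1)\le c$, I would replace every $U_i'$ contribution $x_k(t)-c$ (each $<x_i(t)$) by $x_i(t)$ and every $L_j'$ contribution $x_k(t)+c$ (each $>x_j(t)$) by $x_j(t)$; since these only enlarge $x_i(t+1)$ and shrink $x_j(t+1)$, they give valid one-sided bounds $\overline{X}_i\ge x_i(t+1)$ and $\underline{X}_j\le x_j(t+1)$. Writing $e=c-(x_i(t)-x_j(t))\in(0,c)$ and subtracting each node's own current value, the common-block corrections are controlled termwise: each $U_{ij}$ term is negative and each $L_{ij}$ term is less than $e$ in $\overline{X}_i-x_i(t)$, and symmetrically for $x_j(t)-\underline{X}_j$. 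This gives $\overline{X}_i-x_i(t)\le\frac{(1+\lvert L_{ij}\rvert)e}{s+p}$ and $x_j(t)-\underline{X}_j\le\frac{(1+\lvert U_{ij}\rvert)e}{s+q}$, where $s=2+\lvert U_{ij}\rvert+\lvert L_{ij}\rvert$, $p=\lvert U_i'\rvert$, $q=\lvert L_j'\rvert$. Adding these to $x_i(t)-x_j(t)=c-e$ and using $s+p,s+q\ge s$ with the identity $(1+\lvert L_{ij}\rvert)+(1+\lvert U_{ij}\rvert)=s$ collapses the bound to exactly $c$.

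For the reverse bound $x_j(t+1)-x_i(t+1)\le c$ I would mirror the argument, now replacing the $U_i'$ terms by $x_j(t)$ and the $L_j'$ terms by $x_i(t)$ (the opposite extremes) and shifting each average by the \emph{other} node's value. The same termwise estimates on the shared block, after discarding the manifestly negative constant terms, reduce the difference to $\frac{(1+\lvert U_{ij}\rvert+\lvert L_{ij}\rvert)e}{s}<e$, which combined with $x_i(t)-x_j(t)=c-e$ again yields at most $c$. The two inequalities together give $\lvert x_i(t+1)-x_j(t+1)\rvert\le c$.

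The main obstacle is that $U_i'$ and $L_j'$ generally have different cardinalities, so the common block does not cancel and the two updates are averaged over different denominators $s+p$ and $s+q$; a crude ``all contributions lie in one interval'' estimate yields only $c+e$. The device that saves the proof is the extreme-value replacement, which condenses each side into a single fraction whose weight is controlled by $s$ and lets the identity $(1+\lvert U_{ij}\rvert)+(1+\lvert L_{ij}\rvert)=s$ do the work. A secondary technical point is exact ties: a node sharing $i$'s or $j$'s opinion value can enter $W_{ij}(t)$ through the index tie-break and spoil the clean decomposition, so I would either assume distinct opinion values (the generic situation noted for the model) or absorb such nodes into $i$ or $j$ using the same index-based bookkeeping as in \cref{lemma:order}.
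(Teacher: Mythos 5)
Your proposal is correct and follows essentially the same route as the paper's proof: the same decomposition of the neighborhoods into $U_{ij}(t)$, $U_{ij}'(t)$, $L_{ij}(t)$, $L_{ji}'(t)$ (with $W_{ij}(t)=\emptyset$ from the no-intermediate-node hypothesis), the same reduction to weighted averages via \cref{lemma:average}, and the same one-sided replacement of the node-specific repulsion terms by extreme values to compare the two updates. The only difference is bookkeeping --- the paper pads both averages to a common denominator and bounds each residual term by $c$, whereas you keep distinct denominators and track the slack $e=c-(x_i(t)-x_j(t))$ via the identity $(1+|U_{ij}|)+(1+|L_{ij}|)=s$ --- and your explicit handling of the tie-breaking edge case is a point the paper leaves implicit.
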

\begin{proof}
By the assumption that no nodes have values between $x_i(t)$ and $x_j(t)$, we have that $W_{ij}(t) = W_{ji}(t) = \emptyset$. Then to prove one direction of the bound,
{\footnotesize%
\begin{align*}
    x_i(t+1) &= \frac{|U_{ij}'(t)|(\overline{U_{ij}'}(t) - c) + |U_{ij}(t)|(\overline{U_{ij}}(t)-c) + x_i(t) + (x_j(t)+c) + |L_{ij}(t)|(\overline{L_{ij}}(t)+c)}{2 + |U_{ij}'(t)| + |U_{ij}(t)| + |L_{ij}(t)|} \\
    &\leq \frac{|U_{ij}'(t)|(\overline{U_{ij}'}(t) - c) + |U_{ij}(t)|(\overline{U_{ij}}(t)-c) + x_i(t) + (x_j(t)+c) + |L_{ij}(t)|(\overline{L_{ij}}(t)+c) + |L_{ji}'(t)|(x_j(t)+c)}{2 + |U_{ij}'(t)| + |U_{ij}(t)| + |L_{ij}(t)| + |L_{ji}'(t)|} \\
x_j(t+1) &= \frac{|U_{ij}(t)|(\overline{U_{ij}}(t)-c) + (x_i(t)-c) + x_j(t) + |L_{ij}(t)|(\overline{L_{ij}}(t)+c) + |L_{ji}'(t)|(\overline{L_{ji}'}(t)+c)}{2 + |U_{ij}(t)| + |L_{ij}(t)| + |L_{ji}'(t)|} \\
&\geq \frac{|U_{ij}'(t)|(x_i(t)-c) +|U_{ij}(t)|(\overline{U_{ij}}(t)-c) + (x_i(t)-c) + x_j(t) + |L_{ij}(t)|(\overline{L_{ij}}(t)+c) + |L_{ji}'(t)|(\overline{L_{ji}'}(t)+c)}{2 + |U_{ij}'(t)| + |U_{ij}(t)| + |L_{ij}(t)| + |L_{ji}'(t)|} \\
\end{align*}
}
Combining both equations,
\begin{align*}
x_i(t+1) - x_j(t+1) &\leq \frac{|U_{ij}'(t)|\left(\overline{U_{ij}'}(t) - x_i(t)\right) + c + c + |L_{ji}'(t)|\left(x_j(t) - \overline{L_{ji}'}(t)\right)}{2 + |U_{ij}'(t)| + |U_{ij}(t)| + |L_{ij}(t)| + |L_{ji}'(t)|} \\
&\leq \frac{\left(2 + |U_{ij}'(t)| + |L_{ji}'(t)|\right)c}{2 + |U_{ij}'(t)| + |U_{ij}(t)| + |L_{ij}(t)| + |L_{ji}'(t)|} \\
&\leq c
\end{align*}
To prove the other direction,
{\footnotesize%
\begin{align*}
x_j(t+1) &= \frac{|U_{ij}(t)|(\overline{U_{ij}}(t)-c) + (x_i(t)-c) + x_j(t) + |L_{ij}(t)|(\overline{L_{ij}}(t)+c) + |L_{ji}'(t)|(\overline{L_{ji}'}(t)+c)}{2 + |U_{ij}(t)| + |L_{ij}(t)| + |L_{ji}'(t)|} \\
&\leq \frac{|U_{ij}'(t)|(\overline{L_{ij}}(t)+c) +|U_{ij}(t)|(\overline{U_{ij}}(t)-c) + (x_i(t)-c) + x_j(t) + |L_{ij}(t)|(\overline{L_{ij}}(t)+c) + |L_{ji}'(t)|(\overline{L_{ji}'}(t)+c)}{2 + |U_{ij}'(t)| + |U_{ij}(t)| + |L_{ij}(t)| + |L_{ji}'(t)|} \\
x_i(t+1) &= \frac{|U_{ij}'(t)|(\overline{U_{ij}'}(t) - c) + |U_{ij}(t)|(\overline{U_{ij}}(t)-c) + x_i(t) + (x_j(t)+c) + |L_{ij}(t)|(\overline{L_{ij}}(t)+c)}{2 + |U_{ij}'(t)| + |U_{ij}(t)| + |L_{ij}(t)|} \\
&\geq \frac{|U_{ij}'(t)|(\overline{U_{ij}'}(t) - c) + |U_{ij}(t)|(\overline{U_{ij}}(t)-c) + x_i(t) + (x_j(t)+c) + |L_{ij}(t)|(\overline{L_{ij}}(t)+c) + |L_{ji}'(t)|(\overline{U_{ij}}(t)-c)}{2 + |U_{ij}'(t)| + |U_{ij}(t)| + |L_{ij}(t)| + |L_{ji}'(t)|} \\
\end{align*}
}
Combining both inequalities yields
{\footnotesize
\begin{equation*}
    x_j(t+1) - x_i(t+1) \leq \frac{|U_{ij}'(t)|\left(\overline{L_{ij}}(t) - \overline{U_{ij}'}(t) +2c\right) + c + c + |L_{ji}'(t)|\left( \overline{L_{ji}'}(t) - \overline{U_{ij}}(t) + 2c\right)}{2 + |U_{ij}'(t)| + |U_{ij}(t)| + |L_{ij}(t)| + |L_{ji}'(t)|} 
\end{equation*}
}
Note, however, that
\begin{align*}
    2c &= (x_i(t) + c) - (x_i(t) - c) \\
    &> \overline{U_{ij}'}(t) - \overline{L_{ij}}(t) \\
    &> (x_j(t) +c) - x_j(t) = c
\end{align*}
and similarly $c < \overline{U_{ij}}(t) - \overline{L_{ji}'}(t)<2c$ so that we have
{\small%
\begin{align*}
    x_j(t+1) - x_i(t+1) &\leq \frac{|U_{ij}'(t)|\left(\overline{L_{ij}}(t) - \overline{U_{ij}'}(t) +2c\right) + c + c + |L_{ji}'(t)|\left( \overline{L_{ji}'}(t) - \overline{U_{ij}}(t) + 2c\right)}{2 + |U_{ij}'(t)| + |U_{ij}(t)| + |L_{ij}(t)| + |L_{ji}'(t)|} \\
    &\leq \frac{\left(2 + |U_{ij}'(t)| + |L_{ji}'(t)|\right)c}{2 + |U_{ij}'(t)| + |U_{ij}(t)| + |L_{ij}(t)| + |L_{ji}'(t)|} \\
    &\leq c
\end{align*}
}
and the proof is finished.
\end{proof}

\begin{theorem} Let $G = (V, E)$ be a network with $n$ nodes and confidence bound $c$. Suppose that $G$ is the complete graph, and that every edge $(i,j) \in E$ is repulsive (that is $A_{ij} = -1$). Suppose also that we have initial opinions $x_i(0)$ such that $|x_i(0) - x_j(0)| < c$. Then the model converges, and $\max_{i,j}\left|x_i(T) - x_j(T)\right| = (n-1)c$.
\label{thm:width}
\end{theorem}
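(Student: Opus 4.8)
The plan is to exploit the order-preservation results together with monotone convergence of the two extreme opinions, and then to propagate convergence inward by a finite downward induction on the sorted nodes. First I would use \cref{lemma:order}, \cref{cor:order}, and \cref{cor:lowest} to relabel the nodes so that $x_1(t)\le x_2(t)\le\cdots\le x_n(t)$ for every $t$, with this ordering preserved in time (generically the opinions stay distinct, and the boundary convention in \cref{eq:distancescaling} fixes a consistent labeling in the event of ties). Writing $g_i(t)=x_{i+1}(t)-x_i(t)$ for the consecutive gaps, the quantity to control is $\max_{i,j}|x_i(t)-x_j(t)|=x_n(t)-x_1(t)=\sum_{i=1}^{n-1}g_i(t)$. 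Since consecutive nodes have no node strictly between them, \cref{lemma:gapwidth} gives $g_i(t+1)\le c$ whenever $g_i(t)<c$; and when $g_i(t)=c$ the two nodes no longer lie within confidence of each other and, as one checks directly from \cref{lemma:average}, are each pushed only toward the other or not at all, so $g_i$ cannot grow past $c$. By induction on $t$, every gap stays $\le c$, hence $x_n(t)-x_1(t)\le(n-1)c$ at all times.

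Next I would establish that the extreme opinions converge. Applying \cref{lemma:average} to the top node gives $U_n(t)=\emptyset$, so $x_n(t+1)$ is a weighted average of $x_n(t)$ and the values $x_j(t)+c$ for $j\in L_n(t)$, each of which exceeds $x_n(t)$; thus $x_n(t)$ is nondecreasing, and symmetrically $x_1(t)$ is nonincreasing. Combined with the width bound this yields $x_n(t)\le x_1(0)+(n-1)c$, so $x_n(t)$ is nondecreasing and bounded above and therefore converges to some $x_n^\ast$. In particular its increments tend to $0$; since that increment equals $\tfrac{1}{1+|L_n(t)|}\sum_{j\in L_n(t)}\bigl(c-(x_n(t)-x_j(t))\bigr)$, a sum of nonnegative terms over an index set of size at most $n-1$, every term tends to $0$. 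The term for $j=n-1$ gives $c-g_{n-1}(t)\to 0$ (the value being exactly $0$ at any time when $g_{n-1}(t)=c$), so $g_{n-1}(t)\to c$.

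I would then run this inward. Suppose $x_{k+1}(t)$ converges and $g_k(t),\dots,g_{n-1}(t)\to c$. Then $x_k(t)=x_{k+1}(t)-g_k(t)$ converges, so the net displacement of node $k$ tends to $0$. Among the nodes above $k$, only those at distance $<c$ contribute; since $g_k,g_{k+1}\to c$, eventually only $k+1$ qualifies, with downward contribution $c-g_k(t)\to 0$. Hence the upward force on $k$, a sum of nonnegative terms $c-(x_k(t)-x_j(t))$ over $j\in L_k(t)$, also tends to $0$; reading off the $j=k-1$ term yields $g_{k-1}(t)\to c$, and then $x_{k-1}(t)=x_k(t)-g_{k-1}(t)$ converges, closing the induction. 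Carried from the top node down to $k=2$, this shows every opinion converges (so the model converges) and every gap satisfies $g_i(t)\to c$, whence $x_n(t)-x_1(t)=\sum_{i=1}^{n-1}g_i(t)\to(n-1)c$, which is the asserted value of $\max_{i,j}|x_i(T)-x_j(T)|$.

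The step I expect to be the main obstacle is this downward induction, precisely because the gaps attain $c$ only in the limit while the membership of the sets $V_i^+,U_i,L_i$ can change from step to step. The delicate point is to argue cleanly that ``net displacement $\to 0$'' transfers to ``$g_{k-1}\to c$'': one must first secure that the downward push on node $k$ genuinely vanishes, which requires $g_k,g_{k+1}\to c$ so that node $k+2$ and higher eventually leave $k$'s confidence set, before extracting information about the gap immediately below $k$. I would also want to confirm the order-preservation step for all pairs (not only the global extrema treated in \cref{lemma:order}) by the same segmentation argument, so that the sorted labeling, and with it the notion of consecutive gaps, remains valid for every $t$.
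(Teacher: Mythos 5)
Your proposal is correct in its essentials and shares the skeleton of the paper's argument---order preservation, working inward from the two extremes, and \cref{lemma:gapwidth} to cap consecutive gaps at $c$---but it executes the crucial convergence step by a genuinely different and more rigorous mechanism. The paper argues qualitatively: the extremal node is pushed outward until it retains exactly one neighbor within confidence, the second-most-extreme node then stabilizes, and so on inward, after which ``all nodes push each other apart until the gap between any two consecutive nodes is precisely $c$.'' You instead observe that $x_n(t)$ is a weighted average of $x_n(t)$ and values $x_j(t)+c>x_n(t)$, hence nondecreasing and bounded above by $x_1(0)+(n-1)c$ thanks to the gap bound; monotone convergence then forces the increment, a normalized sum of the nonnegative terms $c-(x_n(t)-x_j(t))$, to vanish termwise, giving $g_{n-1}(t)\to c$, and your downward induction transfers ``vanishing net displacement'' into ``vanishing upward push'' one node at a time. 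This buys two things the paper's sketch does not: a genuine proof that each $x_i(t)$ converges (not merely that the order eventually freezes), and a correct accounting of the fact that convergence is asymptotic---the gaps approach $c$ but in general never equal it at any finite time, so the value $(n-1)c$ is attained only in the limit, a point the paper's ``at which point the model has converged'' elides. The two arguments share the same soft spot, which you flag explicitly: order preservation for \emph{all} pairs (needed so that the sorted labeling and the consecutive gaps $g_i$ are well defined at every $t$) is established in the paper only for the global extrema via \cref{lemma:order}, \cref{cor:order}, and \cref{cor:lowest}, and both proofs assert that the segmentation argument extends to interior pairs without carrying it out; likewise both set aside exact ties, where the convention in \cref{eq:distancescaling} injects a displacement of magnitude $c$ that would need separate handling. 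Your supplementary check that a gap equal to $c$ cannot grow (because the two nodes then lie outside each other's confidence and each is subsequently pushed only toward the other, if at all) is a small but worthwhile tightening of the paper's unargued claim that no gap ever exceeds $c$.
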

\begin{proof} 
The intuition for this theorem is as follows: for any repulsive edge $(i,j)$, nodes $i$ and $j$ will repel each other until
\begin{equation*}
    |x_i(t) - x_j(t)| >= c
\end{equation*}
at some future timestep $t$. If every edge is repulsive, we must have distance at least $c$ between every pair of nodes connected by an edge in order for the model to converge. Intuitively, the nodes will always continue to push each other outward until they reach a distance of $c$, and no further, so that the final convergent state of the model will occur when there are gaps of at least $c$ between all of the $m$ edges in the original graph. However, from~\cref{lemma:max}, the gaps will have precisely width $c$, so that the bound holds.

From~\cref{cor:order} and~\cref{cor:lowest}, at time 1, there must be a highest and lowest-value opinion node. By~\cref{lemma:order}, for $t>1$, these nodes will always be the highest and lowest-value opinion nodes. Call these nodes $i_{max}, i_{min}$.

Because $G$ is the complete graph, and all edges are repulsive, we can observe that $i_{max}$ and $i_{min}$ will have their opinions pushed outward, since initially every node is within confidence of every node. Additionally, from~\cref{lemma:grouping}, we can observe that $i_{max}$ will be pushed in the direction of $\overline{\{j \neq i\}}(0) + c$, so that the nodes with opinions much lower valued than the average will start to drop out of confidence of $i_{max}$. Further, from~\cref{lemma:max}, $i_{max}$ will remain within confidence of at least one node as long as it is within confidence of at least 2 nodes in the previous timestep. Combining these lemmas, we can see that eventually at time $t'$, $i_{max}$ will be within confidence of exactly one other node.

Let $i_{max}'$ be the singular node for which $x_{i_{max}}(t') - x_{i_{max}'}(t') < c$. Then we can follow the same proof procedure as in~\cref{lemma:order} to prove that $x_{i_{max}'}(t'+1) > x_j(t'+1)$ for all $j \in V$ other than $j = i_{max}$, and that none of the remaining nodes can be pushed into confidence of $i_max$. We do not include the procedure here because of its similarity to~\cref{lemma:order}, but the key observation that drives the proof is that there is only a single node $i_{max}$ exerting downward pressure on $i_{max}'$ (if a very high number of nodes were exerting downward pressure on $i_{max}'$, it would be possible for $i_{max}'$ to lose its position as the node with second-highest-value opinion). This allows us to rewrite the $x_{i_{max}'}$ as an average of values which preserve the order of $i_{max}, i_{max}',$ and the remaining nodes. Similarly, we can show that there is some time after which the node with the second-lowest-value opinion will always remain the node with the second-lowest-value opinion. 

We continue in this manner, proceeding from the nodes with the highest and lowest-value opinions inwards until we show that after some time, the nodes' opinions must remain in a fixed order. 

From this point on, we observe that from~\cref{lemma:gapwidth}, the gap between any two consecutive nodes is bounded by $c$. Because of our initial conditions on $x_i(t)$, it is impossible for any gap between consecutive nodes to be larger at any point. If any two nodes have a gap smaller than $c$, we will not have converged, as the repulsion between the two nodes will push them apart in the next time step. All nodes will push each other apart until the gap between any two consecutive nodes is precisely $c$, at which point the model has converged. Because there are $n$ nodes, this tells us
\begin{equation*}
    \max\{|x_i(T) - x_j(T)|\} = (n-1)c
\end{equation*}
\end{proof}

The proof for~\cref{thm:width} relies on all edges being repulsive, thereby preserving the ordering of the nodes. This property does not necessarily hold when there are both attractive and repulsive edges. However, we suspect based on numerics that the following theorem is also true:

\begin{theorem}
Suppose $G = (V,E)$ is a network with $n$ nodes, $m$ edges, and confidence bound $c$. Let $m_r$ be the number of repulsive edges in the network. Then the model converges and
\begin{equation*}
    \max_{i,j \in V} \{x_i(T) - x_j(T)\} \leq \max\{\max_{i,j \in V} \{x_i(0) - x_j(0)\}, mc\}
\end{equation*}
\end{theorem}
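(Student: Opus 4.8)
The plan is to separate the statement into its two assertions---convergence of the dynamics and the width bound---and to attack the width bound first by analyzing the converged configuration directly, deferring (and flagging) convergence as the genuinely hard part. Throughout I would work with the stronger bound in which $m$ is replaced by the number of repulsive edges $m_r$: since $m_r \le m$, and since a purely attractive network reduces to Hegselmann--Krause and hence keeps every opinion within its initial range, establishing $\max_{i,j}\{x_i(T)-x_j(T)\} \le \max\{W_0,\, m_r c\}$, where $W_0 = \max_{i,j}\{x_i(0)-x_j(0)\}$ is the initial width, immediately yields the stated inequality.

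For the width bound, assume convergence and let $y_1 \le y_2 \le \cdots \le y_n$ be the converged opinions at time $T$ in increasing order, with consecutive gaps $g_k = y_{k+1}-y_k$, so the width equals $\sum_{k=1}^{n-1} g_k$. The first step is to classify each gap by comparison with $c$. A gap with $g_k > c$ cannot have been opened by repulsion, since \cref{lemma:max} shows repulsion drives a pair only toward separation $c$ and no further, so such a gap must be inherited from the initial configuration and is therefore controlled by $W_0$. A gap with $g_k \le c$ is either held open at exactly $c$ by a repulsive edge that reached its target, or maintained strictly inside confidence by repulsion balancing attraction at equilibrium ($x_i(T+1)=x_i(T)$ for every $i$); a crossing attractive edge with no compensating repulsion would instead collapse the gap and contradict convergence. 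This motivates a \emph{charging} argument: assign to each gap of size at most $c$ a repulsive edge whose endpoints straddle it, use the rewritings of \cref{lemma:average} and \cref{lemma:grouping} together with the estimates of \cref{lemma:gapwidth} to show each contributes at most $c$, and verify that no repulsive edge is charged twice. Summing over the at most $m_r$ repulsive edges bounds the repulsion-generated part of the width by $m_r c$, which with the inherited part controlled by $W_0$ gives the $\max$ bound.

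The main obstacle is convergence itself, which the all-repulsive case (\cref{thm:width}) obtained for free from order preservation (\cref{lemma:order}, \cref{cor:order}, \cref{cor:lowest})---a property that fails once attractive edges can pull nodes past one another. Here I would instead build a Lyapunov-type argument from two ingredients. First, the width bound above confines the trajectory to a bounded region, so opinions cannot escape to infinity. Second, I would exploit the distance scaling of \cref{eq:distancescaling}: because a repulsed pair is driven toward separation exactly $c$ rather than overshooting, a pair reaching distance $\ge c$ leaves each other's confidence and, unless pulled back by attraction, stops interacting; this scaling is precisely what rules out the indefinite ping-ponging of the degenerate three-node example. The hard part is controlling attraction repeatedly pulling a pair back inside confidence while repulsion pushes it back out, so I would search for a monotone functional---for instance a suitably weighted sum of pairwise within-confidence discrepancies, with repulsive pairs measured against the target separation $c$---that decreases at each step and is bounded below, forcing the set of active within-confidence pairs to stabilize. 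If such a functional can be exhibited, convergence follows and the preceding bound applies at the stabilized configuration.

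I would finally flag two points that a complete write-up must resolve. The natural and stronger statement replaces $m$ by $m_r$, and it is this version I would prove. More subtly, the bound is a $\max$ rather than a sum, so the charging argument must show that repulsion-opened structure of total scale $m_r c$ does not stack additively on top of a large initial range---intuitively because repulsion acts only within confidence and therefore cannot enlarge separations that already exceed $c$. Making this non-additivity rigorous, rather than settling for the weaker $\max\{W_0,\, m_r c\} \le W_0 + m_r c$, is the most delicate bookkeeping in the width half of the argument.
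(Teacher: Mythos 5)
Be aware that the paper itself does not prove this statement: the authors label their argument ``Intuition,'' say only that they ``suspect based on numerics'' that the theorem is true, and explicitly concede that the order-preservation technique of \cref{thm:width} is unavailable once attractive edges are present. So there is no complete proof to match yours against. Your plan is in essence a more structured rendering of the same heuristic --- your charging of each repulsion-opened gap to a distinct repulsive edge is the paper's ``worst case is a chain of $m$ repulsive edges each contributing a gap of $c$'' made explicit, and your observation that the bound should really involve $m_r$ rather than $m$ is well taken (the paper defines $m_r$ and then, apparently by oversight, never uses it). You also correctly identify convergence as the genuinely open part; the paper has the same hole and fills it with nothing more than an appeal to simulations.

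That said, two steps in your width argument would fail as stated, beyond the gaps you already flag. First, the classification claim that any converged gap $g_k > c$ ``must be inherited from the initial configuration'' is not sound: each node can move by up to $c$ per time step (every term $M_{ij}$ entering the average in \cref{eq:update} satisfies $|M_{ij}(t)| \le c$), so two nodes that are adjacent in opinion order can be pulled in opposite directions by attractions to distant third parties and end up more than $c$ apart in a single step, with no repulsive edge between them and no such gap present initially. A gap exceeding $c$ can therefore be \emph{created} by the dynamics, and controlling it requires tracing it back to the widths of the clusters doing the pulling, which is exactly the additive-stacking problem you defer. Second, the Lyapunov half is a search, not a construction; absent an explicit monotone functional (and the degenerate three-node example shows the naive candidates fail without the distance scaling of \cref{eq:distancescaling}), convergence remains unproven, so the entire width analysis is conditional. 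In short: your outline is a reasonable and somewhat sharper restatement of the paper's intuition, but like the paper it does not constitute a proof, and the gap-classification step would need to be repaired even granting convergence.
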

\begin{proof}[Intuition]
The worst case for this model assumes that all repulsive nodes end up at least $c$ apart from each other, so if all nodes start out within confidence of each other, the worst case is one in which all nodes with repulsive edges are chained together in consecutive order along a line of $m$ edges, in which case the width of their opinions cannot exceed $mc$, since the bounds in~\cref{lemma:gapwidth} should apply and prevent any individual gap from growing wider. The only way a gap could grow wider is if there are attractive nodes pulling the repulsed nodes further apart, in which case those attractive nodes either have repulsive forces between them, and have already been considered in the line, or must have started farther apart to begin with, in which case we look at $\max_{i,j \in V} \{x_i(0) - x_j(0)\}$. 

Because we cannot rely on nodes remaining in fixed order in this case, we cannot use the same technique as in~\cref{thm:width} to prove convergence and a bound. However, in practice, we observe that the range of final opinions increases with the number of repulsive edges, and that in practice the bound of $mc$ is not very tight (this is to be expected, as, for example, in the case of the complete graph in~\cref{thm:width}, the bound is considerably smaller). To see numerics showing that the range of final opinions scales with number of repulsive edges and $c$, see~\cref{fig:er} and associated discussion.
\end{proof}

\section{Numerical results on synthetic networks}
\label{sec:synthetic}
In this section we present analysis of numerical simulations on a variety of random networks, chosen for their usage in modelling social structures\cite{siegel2009}.

\subsection{Erd\H{o}s--Renyi}
\label{sec:ER}
We begin with an adaptation of Erd\H{o}s--Renyi (ER) networks as a simple random network model. To achieve a random network with both positive and negative edges, we generate two ER networks, $G_1 = G(n, p_1)$ and $G_2 = G(n, p_2)$, with associated adjacency matrices $A_1$ and $A_2$. The total network $G$, is then the network derived from the adjacency matrix $A_1 - A_2$. A visual of this method can be seen in \cref{fig:ER_example}. In the subsequent network the probability of each type of edge between any set of nodes can be written as:
\begin{equation}
    P((i,j) \in E) = \left\{
    \begin{array}{cc}
         0 & (1-p_1)(1-p_2) + p_1p_2 \\
         1 & p_1(1-p_2) \\
         -1 & (1-p_1)p_2
    \end{array}
    \right.
    \label{eq:ER}
\end{equation}

\begin{figure}[!ht]
   \centering
     \begin{subfigure}[b]{0.3\textwidth}
         \centering
         \includegraphics[width=\textwidth]{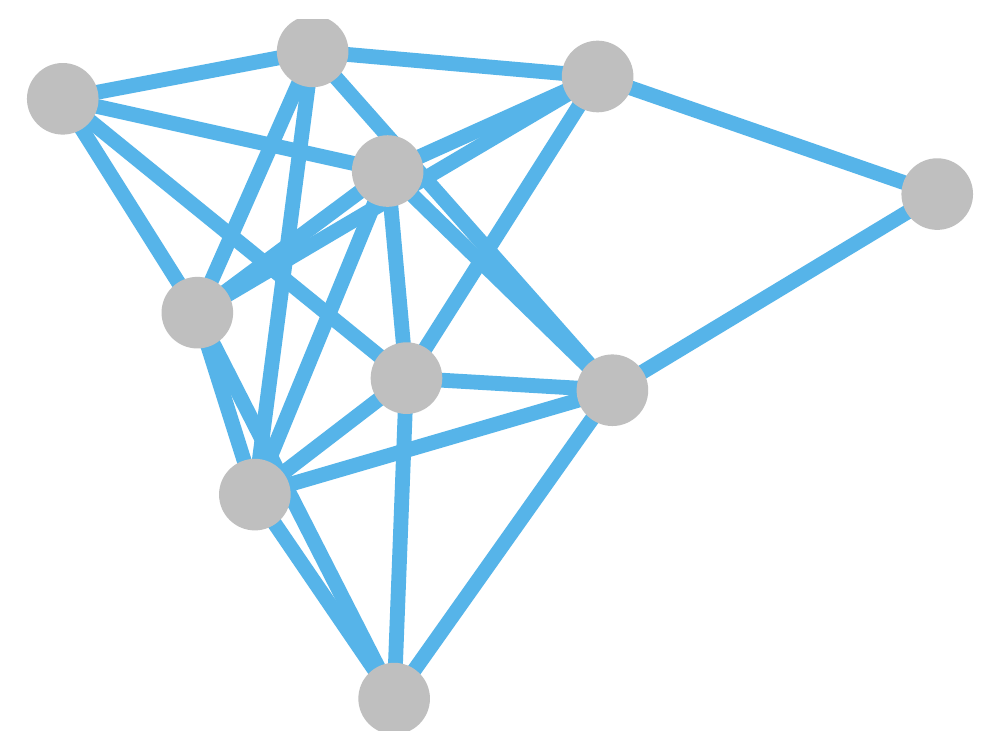}
         \caption{G1: positive node network with $p_1 = 0.6$}
         \label{fig:ER_G1}
     \end{subfigure}
     \hfill
     \begin{subfigure}[b]{0.3\textwidth}
         \centering
         \includegraphics[width=\textwidth]{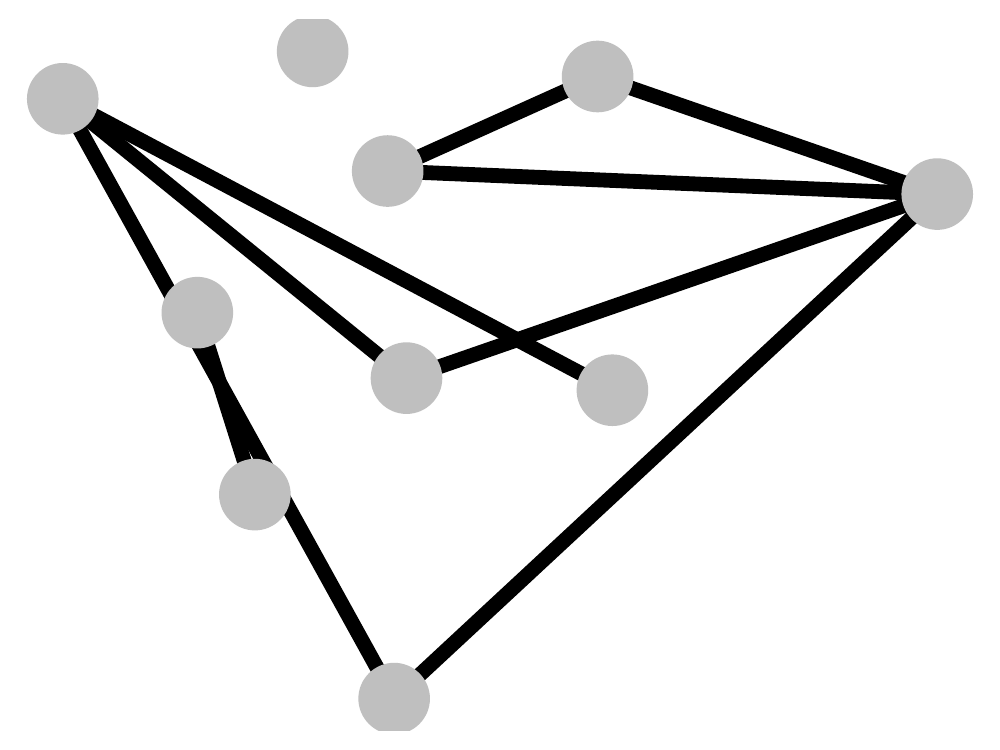}
         \caption{G2: negative node network with $p_2 = 0.2$}
         \label{fig:ER_G2}
     \end{subfigure}
      \hfill
     \begin{subfigure}[b]{0.3\textwidth}
         \centering
         \includegraphics[width=\textwidth]{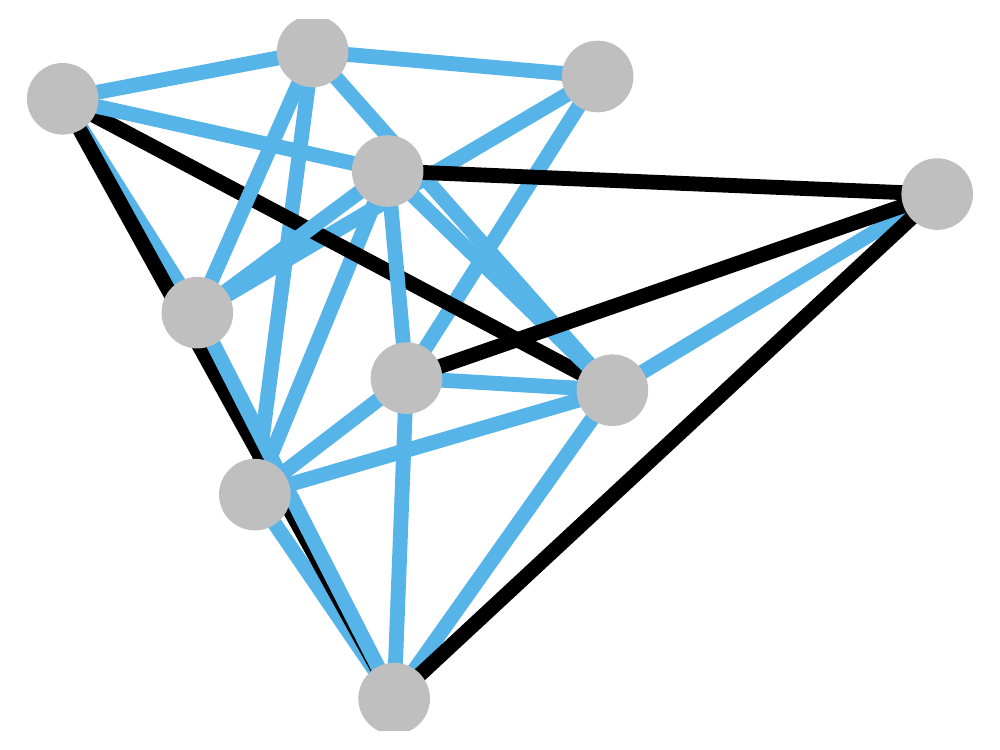}
         \caption{G3: final network with $p_1 = 0.6$ and $p_2 = 0.2$.}
         \label{fig:ER_G3}
     \end{subfigure}
     \caption{An example of the generation of the ER network with attractive and repulsive edges}
     \label{fig:ER_example}
\end{figure}

To create the simulation results, 100 trials were run with all combinations of the following parameters:
\begin{align*}
    p_1 &\in (0.2, 0.4, 0.6, 0.8, 1)\\
    p_2 &\in (0.0, 0.2, 0.4, 0.6, 0.8)\\
    c &\in (0.05, 0.4,  0.8,  1.2, 1.6)
\end{align*}

For each trial, a random set of initial opinions is generated and the model is applied for 10000 iterations. In \cref{fig:er}, one trial is shown for each set of parameters when $p_1$ is set to 0.4. This trial was chosen randomly and all other trials qualitatively look the same. 

The final range of opinions gets wider with both $p_2$ and $c$ once repulsive edges are included. These results are in line with expectations. As $p_2$ increases, so does the number of negative connections, resulting in more repulsive forces between nodes, pushing opinions apart. As $c$ increases, nodes have more neighbors. For $p_2 << p_1$, the attractive forces overpower the repulsive ones, so that higher $c$ leads to more consensus, as in standard HK models. For $p_2 >> p_1$, the opposite is true -- repulsive forces overpower attractive ones, and nodes push each other further apart for higher $c$, resulting in a wider spread of opinions.

\begin{figure}[htbp!]
    \centering
    \includegraphics[width = 0.8\textwidth]{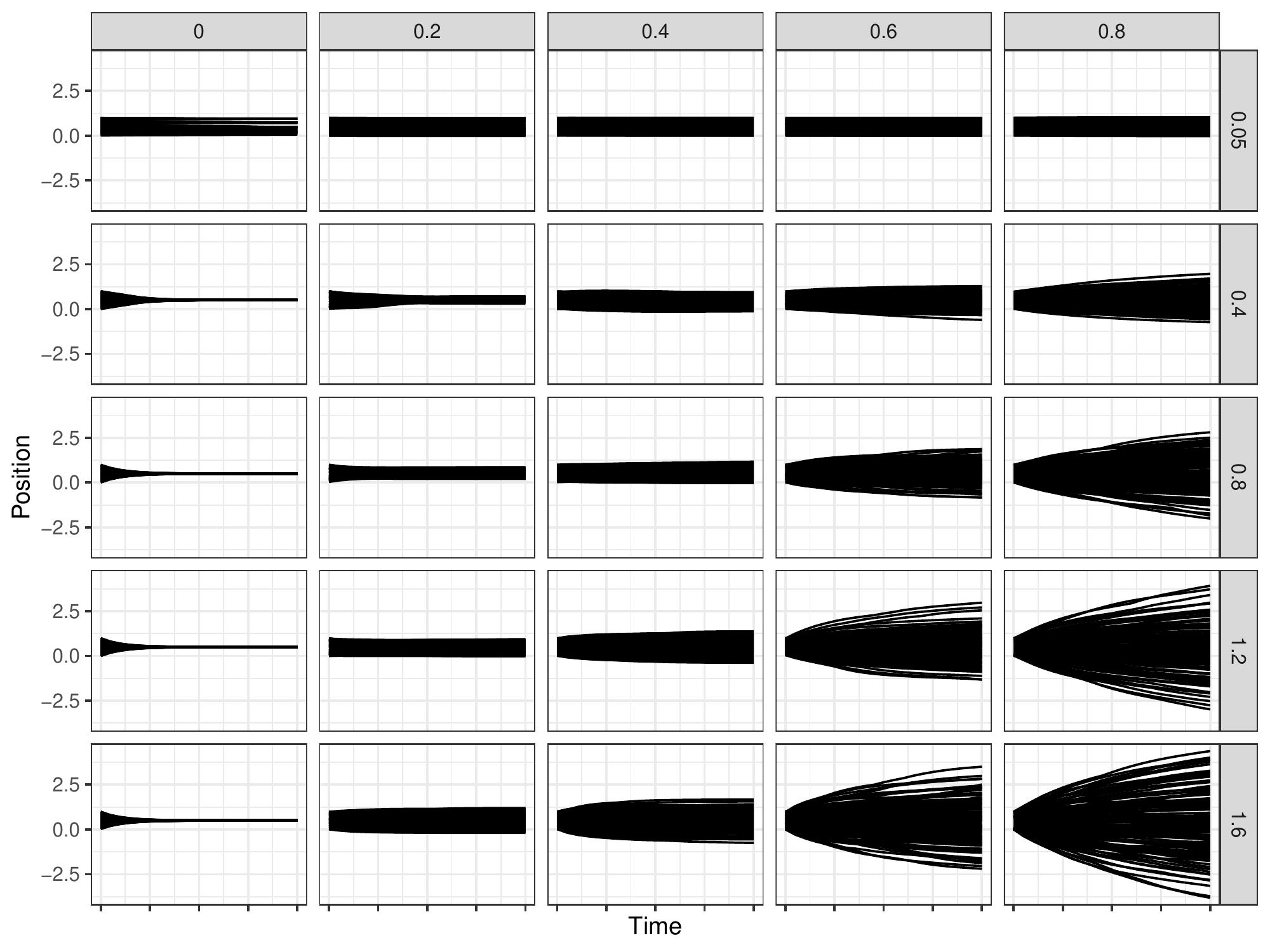}
    \caption{For all plots, $p_1 = .4$. The horizontal axis represents $p_2$, while the vertical axis represents $c$. Note that as $p_2$ increases, the range of final opinions gets wider. For low values of $p_2$, as $c$ increases, the range of final opinions becomes narrower (closer to consensus). By contrast, for high values of $p_2$, as $c$ increases, the range of final opinions becomes wider.}
    \label{fig:er}
\end{figure}

In particular, we observe that the proportion of $\frac{p_2}{p_1}$ seems to be the driving factor in the range of final opinions. To draw clearer conclusions, we look at {\em opinion spread}, which we define as the following quantity:

\begin{equation}
    \frac{\max_{i,j}|x_i(T) - x_j(T)|}{\max_{i,j}|x_i(0) - x_j(0)|}
\end{equation}

\begin{figure}[htbp!]
    \centering
    \includegraphics[width = \textwidth]{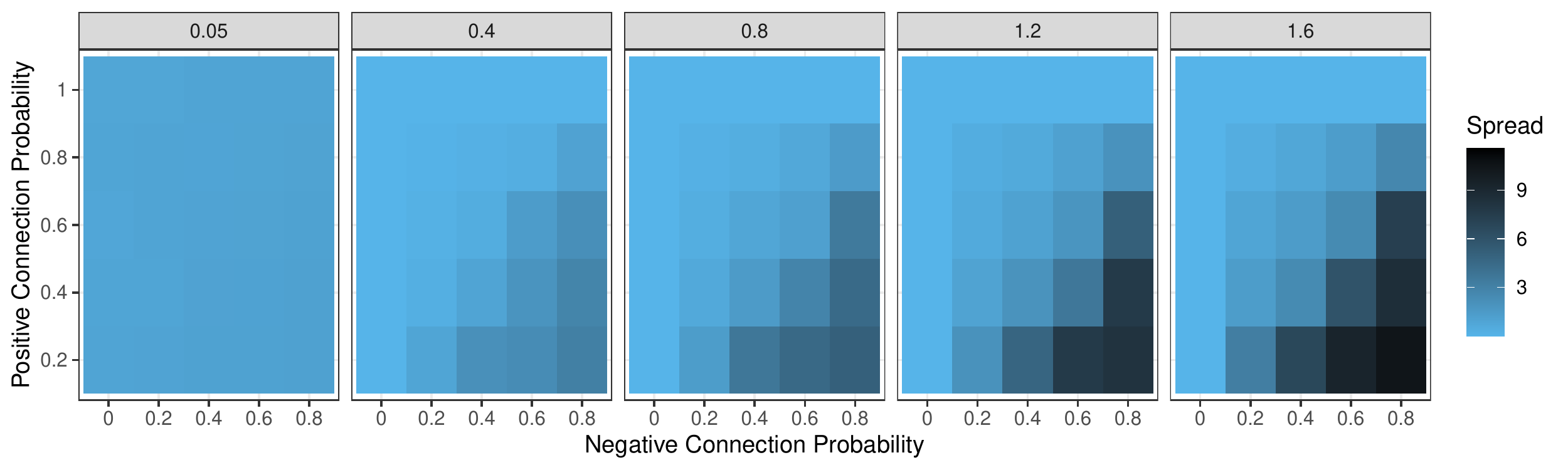}
    \caption{Heat map of opinion spread as a function of the probability of a connection in $G_1$ and $G_2$ iterated over confidence bound $c$. Data drawn from the mean of 100 trials for each set of parameters with parameters $p_1 \in \{0.2, 0.4, 0.6, 0.8, 1\}$, $p_2 \in \{0, 0.2, 0.4, 0.6, 0.8, 1\}$, and $c \in \{0.05, 0.4, 0.8, 1.2, 1.6\}$.}
    \label{fig:er_spread}
\end{figure}

In \cref{fig:er_spread}, we plot average opinion spread across 
trials as a function of the proportion $\frac{p_2}{p_1}$ and confidence bound $c$ in a heat map. We observe similar trends as in \cref{fig:er}, with higher proportions $\frac{p_2}{p_1}$ leading to higher values of opinion spread, and the influence of $c$ on opinion spread depending on $\frac{p_2}{p_1}$. In the following examples, we will similarly see that opinion spread is largely controlled by the negative edges in the network, but that the addition of more structure to the network will influence opinion formation in interesting ways.

\subsection{Stochastic Block Models}
Next, we adapt a Stochastic Block Model (SBM) in order to incorporate both positive and negative edges. In these networks, each node is assigned to a group $k\in K$. The probabilities of connections when $i_k = j_k$ is different than when $i_k \not= j_k$. This enforces structure within the network. Similarly to in \cref{sec:ER} we generate this network through two sub networks. In this case, the process begins with two SBM networks, $G_1 = G(n, p_1, \rho)$ and $G_2 = G(n,p_2, \rho)$, with associated adjacency matrices $A_1$ and $A_2$. The variable $p$ is the probability of having a connection with another node in the same cluster while $p\rho$ is the probability of having a connection with a node in a different cluster. If G is network represented by the adjacency matrix given by $A_1 - A_2$ we have the generated network edge probabilities:
\begin{align}
    P((i,j) \in E)_{i_k = i_j} &= \left\{
    \begin{array}{cc}
         0 & (1-p_1)(1-p_2) + p_1p_2 \\
         1 & p_1(1-p_2) \\
         -1 & (1-p_1)p_2
    \end{array}
    \right.\\
    P((i,j) \in E)_{i_k \not= i_j} &= \left\{
    \begin{array}{cc}
         0 & (1-p_1\rho)(1-p_2\rho) + \rho^2p_1p_2 \\
         1 & p_1\rho(1-p_2\rho) \\
         -1 & (1-p_1\rho)p_2\rho
    \end{array}
    \right.
    \label{eq:SBM}
\end{align}

A sample of the generative process can be seen in \cref{fig:Village_example}, where the blue edges represent positive edges and the black negative. Each color of nodes represents a group $k \in K$. 

\begin{figure}[!ht]
   \centering
     \begin{subfigure}[b]{0.3\textwidth}
         \centering
         \includegraphics[width=\textwidth]{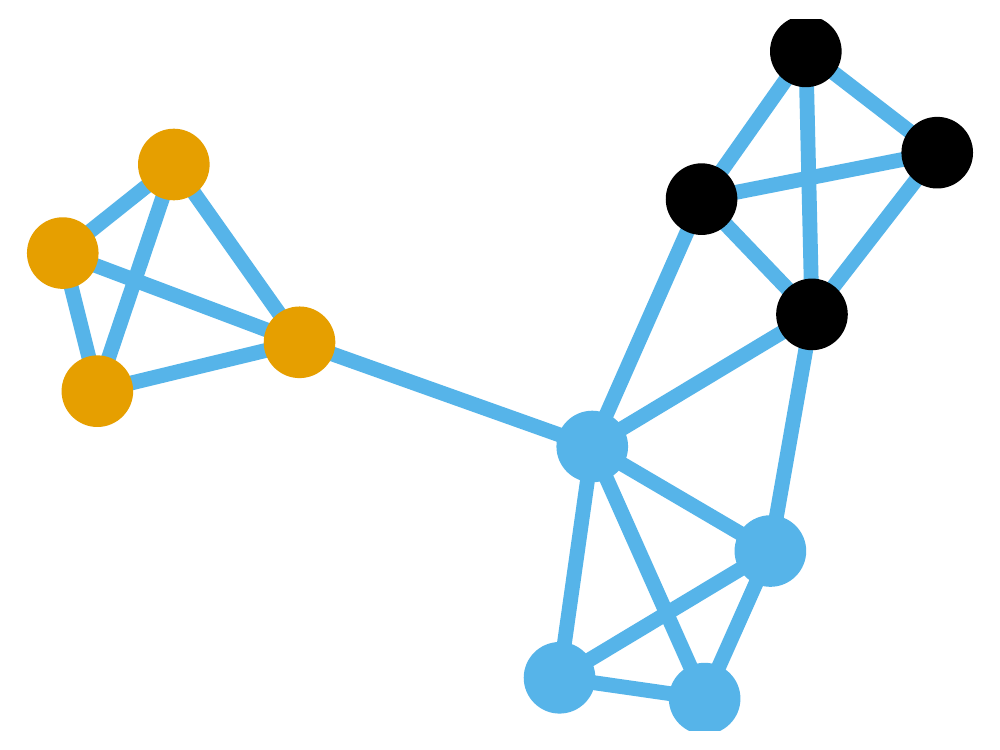}
         \caption{G1: positive node network with $p_1 = 0.85$ and $\rho = 0.05$}
         \label{fig:village_G1}
     \end{subfigure}
     \hfill
     \begin{subfigure}[b]{0.3\textwidth}
         \centering
         \includegraphics[width=\textwidth]{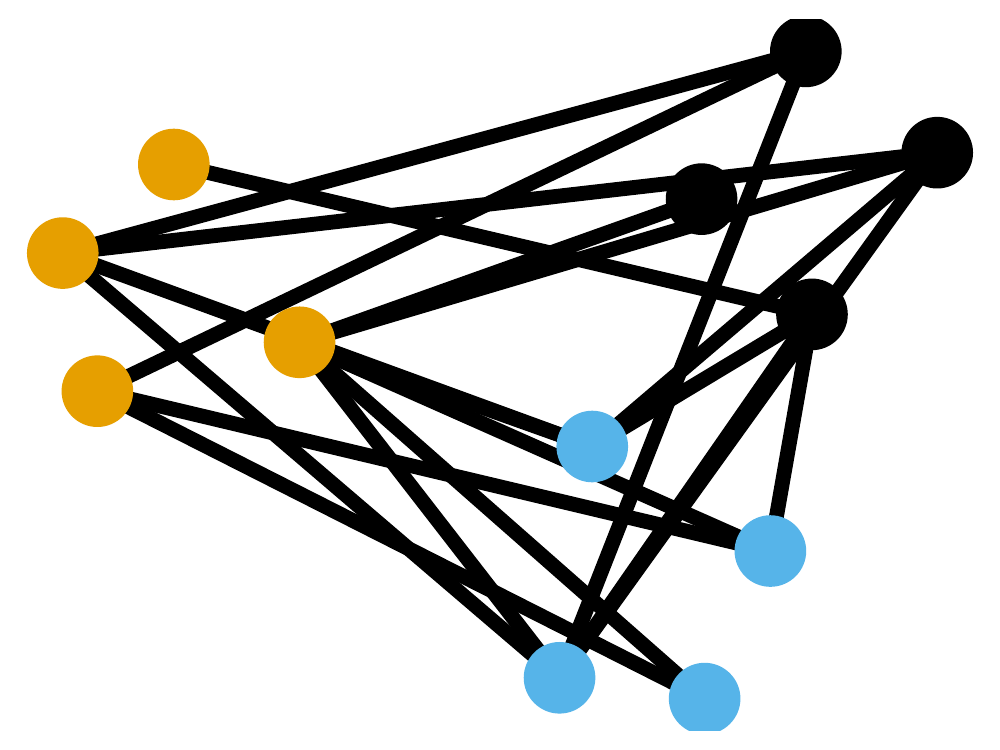}
         \caption{G2: negative node network with $p_2 = 0.3$ and $\rho = 0.05$ }
         \label{fig:village_G2}
     \end{subfigure}
      \hfill
     \begin{subfigure}[b]{0.3\textwidth}
         \centering
         \includegraphics[width=\textwidth]{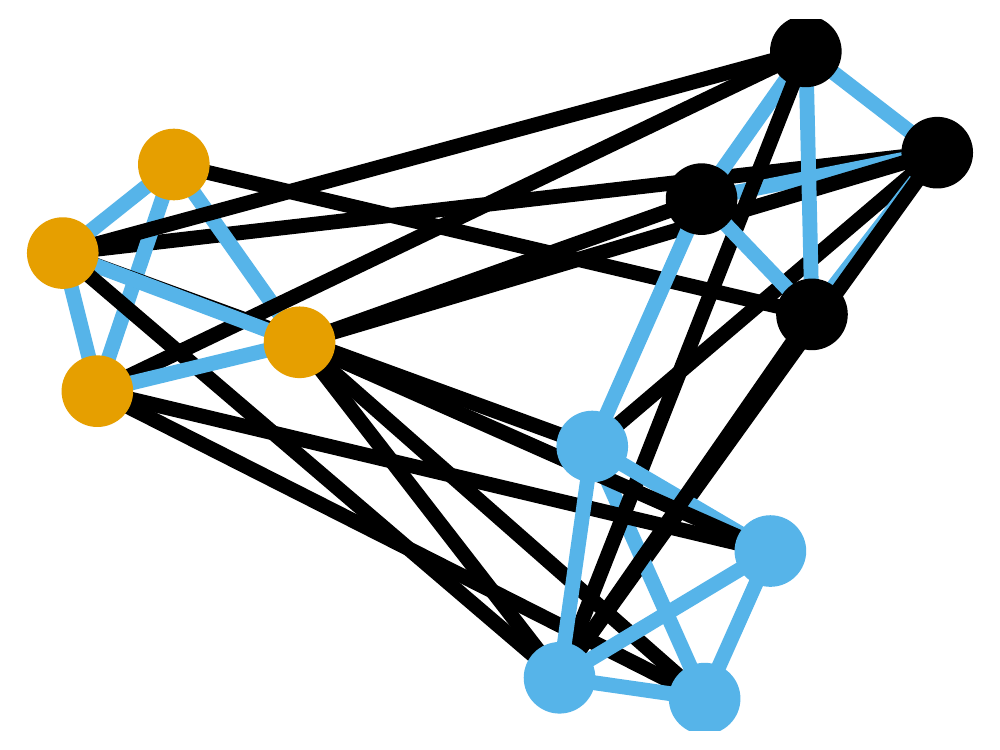}
         \caption{G3: final network $p_1 = 0.85$, $p_2 = 0.3$ and $\rho = 0.05$}
         \label{fig:village_G3}
     \end{subfigure}
     \caption{An example of the generation of the SBM network with attractive and repulsive edges}
     \label{fig:Village_example}
\end{figure}

Again, in order to create simulation results, 100 trials are run for all combinations of the parameters

\begin{align*}
\rho &\in (0, 0.2, 0.4, 0.6, 0.8, 1)\\
    p_1 &\in (0.2, 0.4, 0.6, 0.8, 1)\\
    p_2 &\in (0.0, 0.2, 0.4, 0.6, 0.8)\\
    c &\in (0.05, 0.4,  0.8,  1.2, 1.6)
\end{align*}

For each trial, a random set of initial opinions is generated and the model is applied. The full histories of an example run when $p_1 = 0.8$ and $p_2 = 0.2$ can be seen in \cref{fig:SBM_histories}. It can be seen that each group fully converges on itself, while the confidence interval determines how dispersed the groups are from each other. As $\rho$ increases, the full set begins to converge. As is the case with the ER models, for certain combinations the spread at steady-state is larger than that at the start. From these values we can see that when $p_1$ and $p_2$ are locked, higher confidence bounds result in a larger terminal spread, as does lower percentages of cross-group edges ($\rho$).

\begin{figure}[!ht]
    \centering
    \includegraphics[width = 0.8\textwidth]{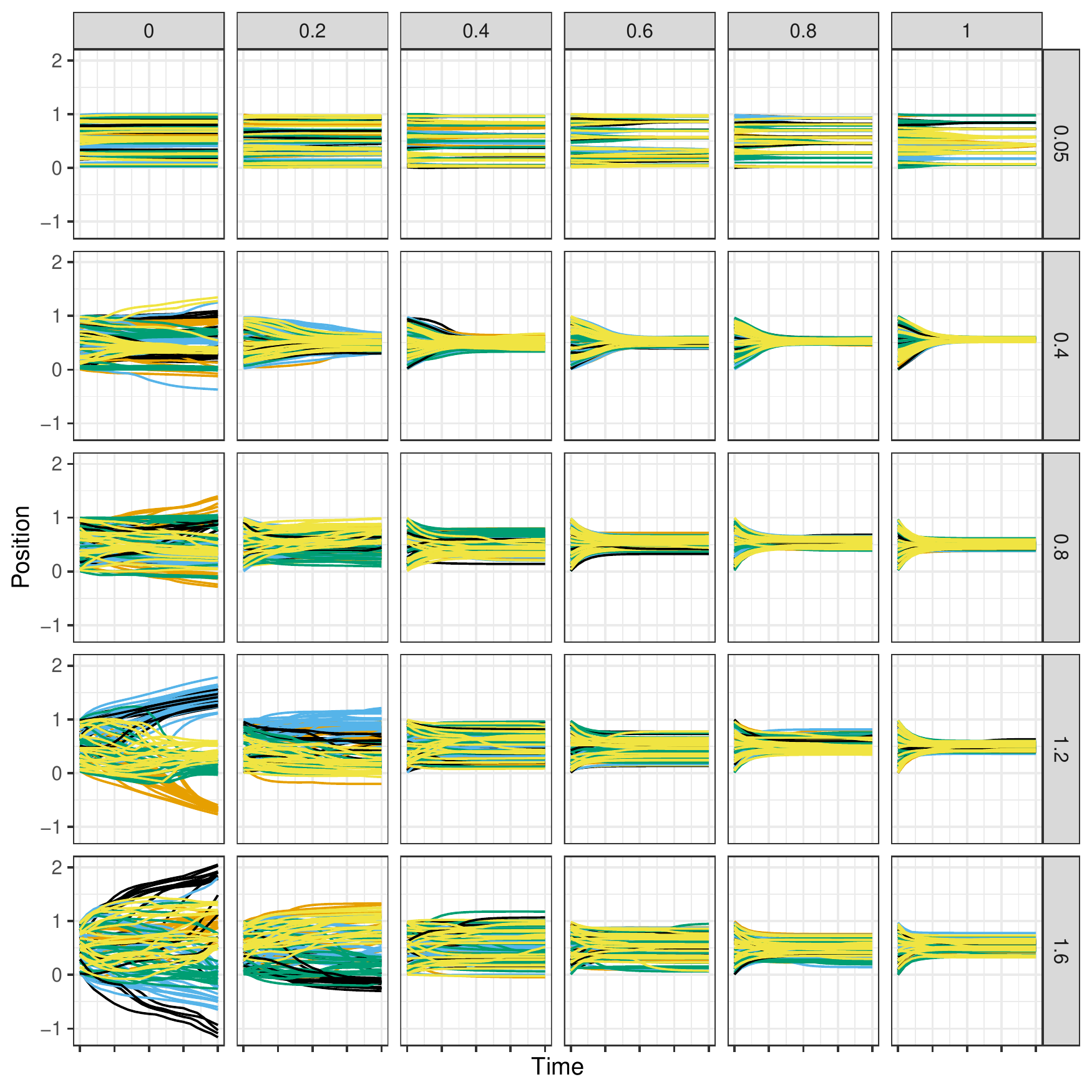}
    \caption{Example of the paths taken when the graph is distributed according to the Stochastic Block Model scheme laid out in \ref{eq:SBM}. In this case $p_1 = 0.8$ and $p_2 = 0.2$ are both locked the confidence bound varies between the rows and the percent of cross group links varies over columns. Each color represents a different group. There are five groups in these examples.}
    \label{fig:SBM_histories}
\end{figure}

As with the ER model, with the SBM we first look at steady-state opinion spread. The results can be seen in \cref{fig:village_opinion_spread}. Note, that when $\rho = 1$ the SBM model is equivalent to the ER model for the same parameters. We therefore have that the final row is identical to \cref{fig:er_spread}. We note that as the value of $\rho$ shrinks, the final spread increases. Otherwise, the trends found for the ER model are consistent.

\begin{figure}[!ht]
    \centering
    \includegraphics[width = \textwidth]{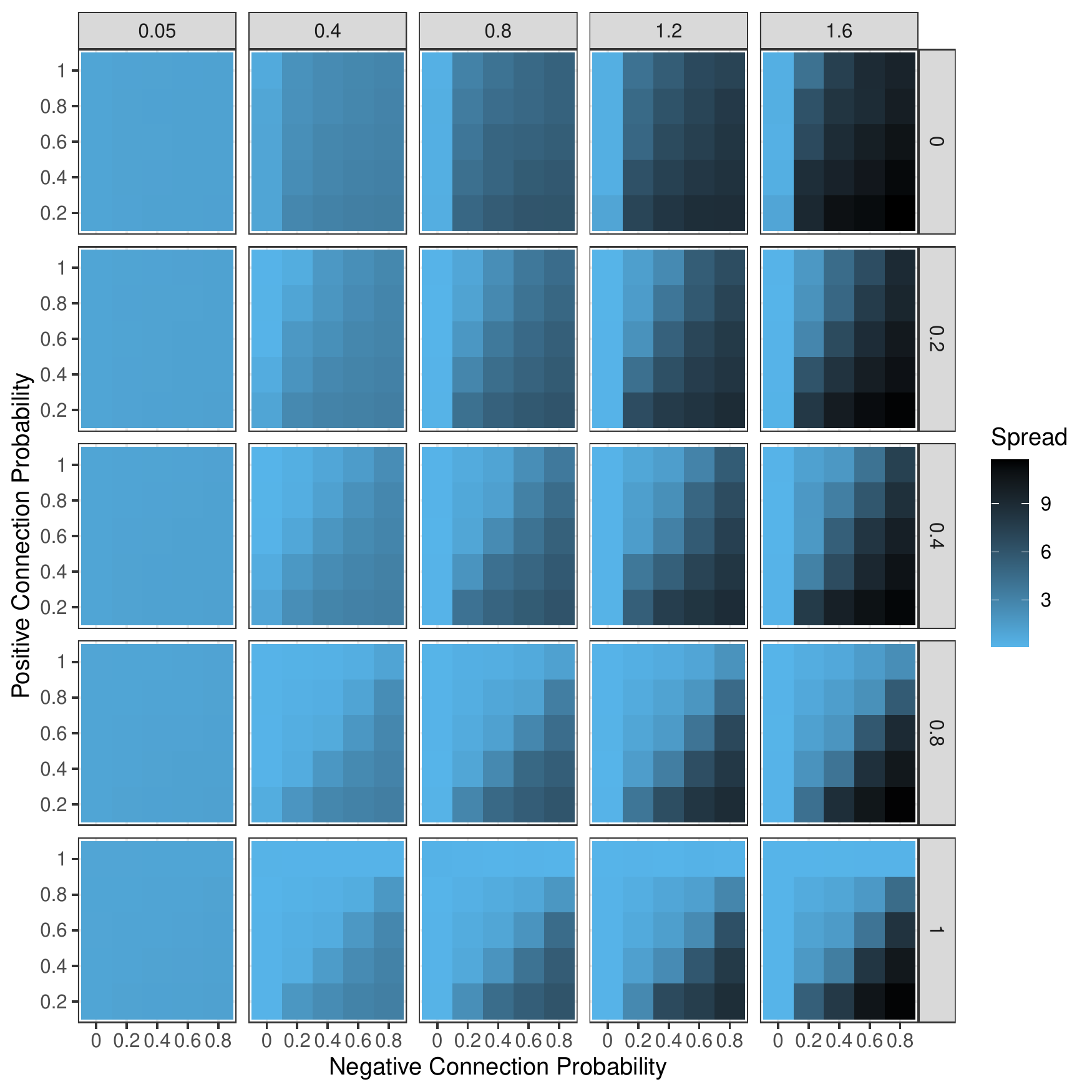}
    \caption{Heat map of the opinion spread of Stochastic Block Model trials. The rows represent $\rho$, the percent of positive(negative) connection values that are out (in) group, while the columns are $c$ the confidence bound. }
    \label{fig:village_opinion_spread}
\end{figure}

In the case of SBMs, since each vertex $i \in V$ is assigned to a group in $k$, we are also interested in clustering in addition to opinion spread. We introduce a measures of how close vertices are to in-group vertices versus out-group vertices. In order to calculate this, which we call \textit{proportional spread}, first we find the average in and out-group distances as:

\begin{equation}
    \mathcal{I}_T =  \frac{1}{|k|}\sum_{k_i \in k} \frac{1}{|k_i|^2}\sum_{j \in k_i \subset V} \sum_{\ell \in k_i \in \subset V} |x_{j}(T) - x_{\ell}(T)|
\end{equation}
\begin{equation}
    \mathcal{O}_T = \frac{1}{|k|}\sum_{k_i \in k} \frac{1}{|k_i|}\sum_{j \in k_i \subset V} \frac{1}{|k| - |k_i| }\sum_{\ell \in \subset V/k_i} |x_{j}(T) - x_{\ell}(T)|
\end{equation}

Since the end-spread of the samples differ, in order to appropriately compare them we look at the ratio, that is

\begin{equation}
    \mathcal{PS}_T = \frac{\mathcal{I}_T}{\mathcal{O}_T}
\end{equation}

Smaller values of proportional spread imply same-group nodes are significantly closer to each other than different-group nodes. Thus, small values imply increased separation by group. In \cref{fig:start_hist} the spread as well as $\mathcal{O}_{0}$ and $\mathcal{I}_{0}$ can be seen. It is clear that the values range most of the space, in addition $\mathcal{O}_{0} \approx \mathcal{I}_{0}$. Thus, uniformly, at the beginning of the trials we have $\mathcal{PS}_0 =  1$. The plots of proportional spread at steady-state over trials can be seen in \cref{fig:PS}. We observe that clustering is most clear with higher values of $p_1$ and lower values of $\rho$. To a lesser extent, clustering also increases with lower values of $p_2$. This implies that the higher rates of in-group connection and fewer cross group connections lead to increased polarization.

\begin{figure}[!ht]
    \centering
    \includegraphics[width = 0.8\textwidth]{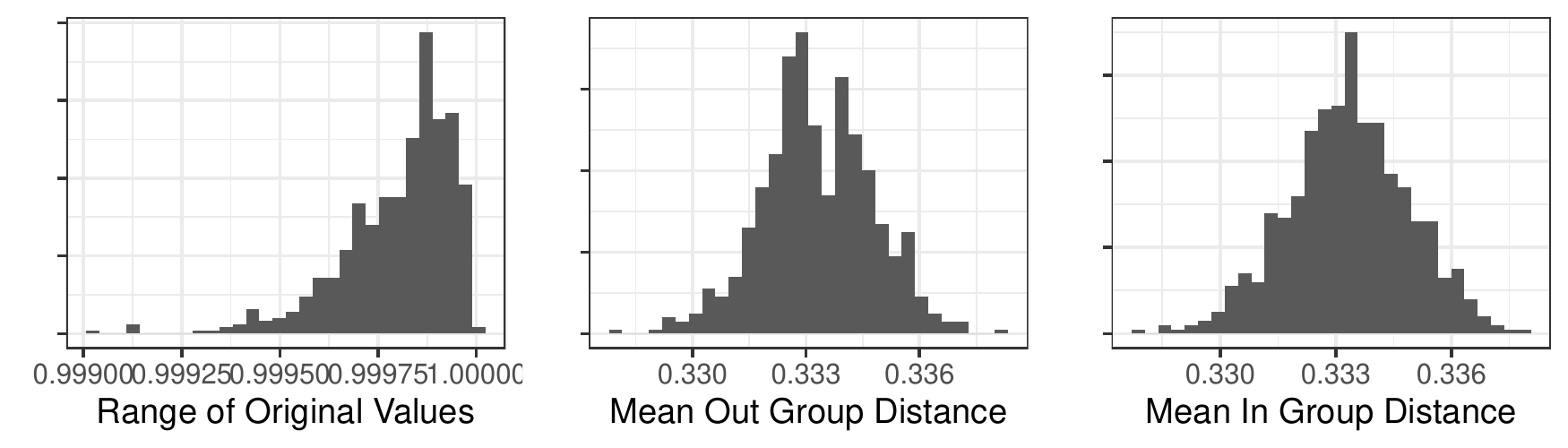}
    \caption{Histograms of metrics at the the start of each trial, useful for comparing the final results}
    \label{fig:start_hist}
\end{figure}

\begin{figure}[!ht]
    \centering
    \includegraphics[width = 0.8\textwidth]{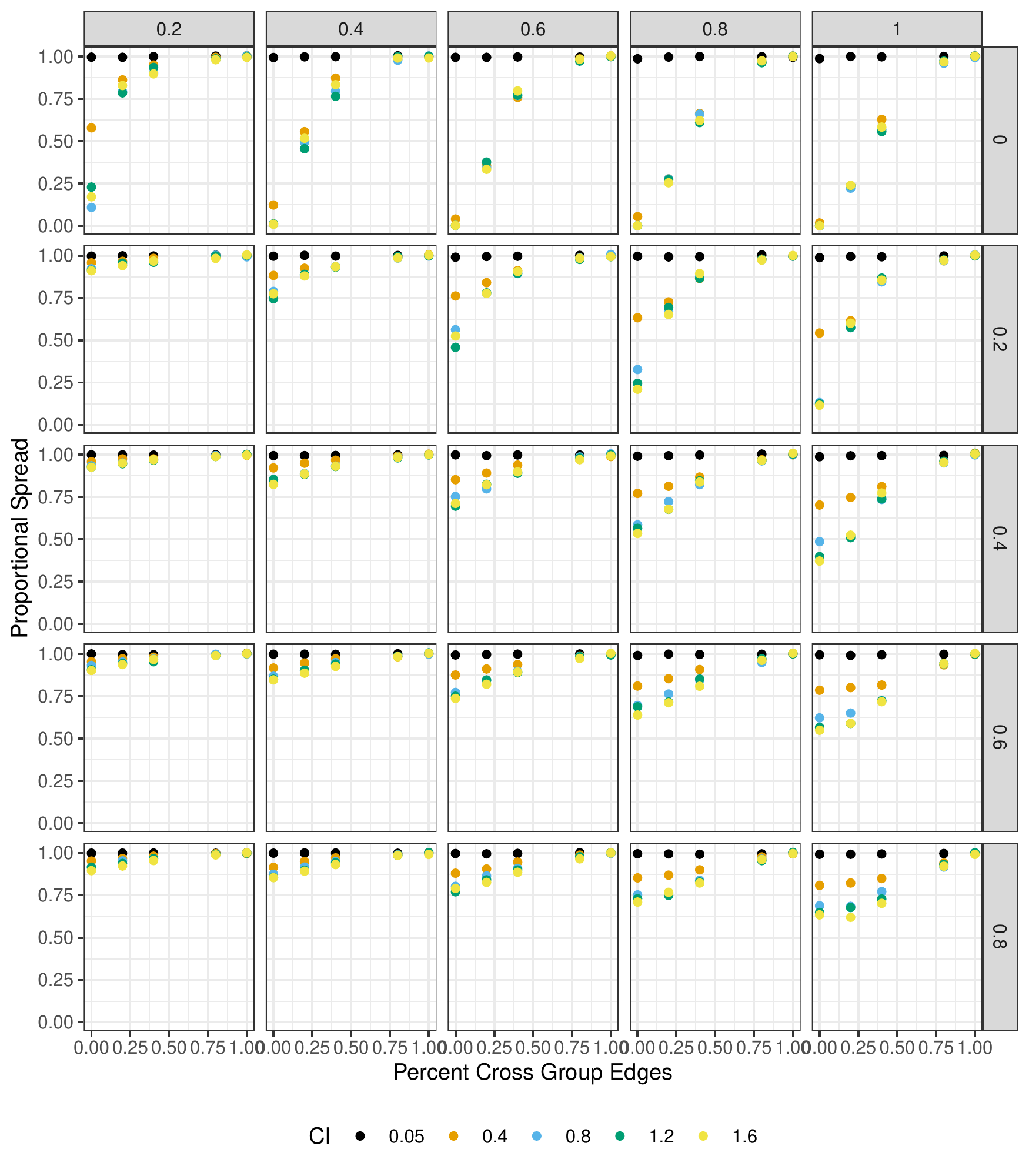}
    \caption{Proportional spread for Stochastic Block Model trials. The rows represent $p_2$, the base probability of an out-group negative connection, while the columns are $p_1$ the base in-group positive probability. The x-axis is $\rho$ the percent of cross group edges and the color represents the confidence bound. A value of 1 means that nodes are equally as close to in and out-group nodes. Smaller values mean groups are increasingly clustered.}
    \label{fig:PS}
\end{figure}





\clearpage
\section{Future Work and Conclusions}
\label{sec:conclusions}
Previous models of opinion dynamics have focused on the attractive nature of network connections. These result in a complete convergence in the steady-state for each receptivity subgraph. When the term \textit{polarization} is used, it still implies a shrinking of the overall opinion space. The model introduced in this paper, in contrast, acknowledges that there are circumstances in which individuals seek to differentiate themselves from those in their network. This behavior leads to the possibility of an overall opinion space expansion.

In this paper, the basic bounds of this expansion were proven analytically in certain cases, with intuition provided for the general case. In addition, the steady-state behavior for random networks were analyzed numerically. These simulation results offered insight into the effects of various structural parameters on the model. It was clear that when there was structure in the network links (for instance in the Stochastic Block Model) group-based clustering emerged. This clustering was despite the random initial conditions provided. In addition, the size of the confidence bounds and the density of repulsive edges are both pivotal in the opinion spread.

The model introduced in this paper lends itself to complex opinion dynamics, where politicians or individuals want to differentiate themselves due to factors orthogonal to their expressed opinions. In future work we hope to explore how this model can help us understand political behavior. Initial ideas include using informative initial conditions for congressional networks. Alternatively, this model can be used to look at ideological opinions of individuals who are influenced by pop culture associating ideological beliefs with other factors. This would introduce a variable connection to an ideal point, which then attracts or repulses the individual. The addition of repulsive forces make bounded-confidence models increasingly relevant for empirical and theoretical studies of human behavior.

\section*{Acknowledgments}
One of the authors (M. Feng) on this project is funded by the James S. McDonnell Foundation Postdoctoral Fellowship. In addition, we would like to thank Danny Ebanks, R. Michael Alvarez, Jonathan Katz, and Mason A. Porter for helpful comments and insights. 

\bibliographystyle{siamplain}
\bibliography{references}

\end{document}


\maketitle

\section{A detailed example}

Here we include some equations and theorem-like environments to show
how these are labeled in a supplement and can be referenced from the
main text.
Consider the following equation:
\begin{equation}
  \label{eq:suppa}
  a^2 + b^2 = c^2.
\end{equation}
You can also reference equations such as \cref{eq:matrices,eq:bb} 
from the main article in this supplement.

\lipsum[100-101]

\begin{theorem}
  An example theorem.
\end{theorem}

\lipsum[102]
 
\begin{lemma}
  An example lemma.
\end{lemma}

\lipsum[103-105]

Here is an example citation: \cite{KoMa14}.

\section[Proof of Thm]{Proof of \cref{thm:bigthm}}
\label{sec:proof}

\lipsum[106-112]

\section{Additional experimental results}
\Cref{tab:foo} shows additional
supporting evidence. 

\begin{table}[htbp]
{\footnotesize
  \caption{Example table}  \label{tab:foo}
\begin{center}
  \begin{tabular}{|c|c|c|} \hline
   Species & \bf Mean & \bf Std.~Dev. \\ \hline
    1 & 3.4 & 1.2 \\
    2 & 5.4 & 0.6 \\ \hline
  \end{tabular}
\end{center}
}
\end{table}

\bibliographystyle{siamplain}
\bibliography{references}


\maketitle

\section{A detailed example}

Here we include some equations and theorem-like environments to show
how these are labeled in a supplement and can be referenced from the
main text.
Consider the following equation:
\begin{equation}
  \label{eq:suppa}
  a^2 + b^2 = c^2.
\end{equation}
You can also reference equations such as \cref{eq:matrices,eq:bb} 
from the main article in this supplement.

\lipsum[100-101]

\begin{theorem}
  An example theorem.
\end{theorem}

\lipsum[102]
 
\begin{lemma}
  An example lemma.
\end{lemma}

\lipsum[103-105]

Here is an example citation: \cite{KoMa14}.

\section[Proof of Thm]{Proof of \cref{thm:bigthm}}
\label{sec:proof}

\lipsum[106-112]

\section{Additional experimental results}
\Cref{tab:foo} shows additional
supporting evidence. 

\begin{table}[htbp]
{\footnotesize
  \caption{Example table}  \label{tab:foo}
\begin{center}
  \begin{tabular}{|c|c|c|} \hline
   Species & \bf Mean & \bf Std.~Dev. \\ \hline
    1 & 3.4 & 1.2 \\
    2 & 5.4 & 0.6 \\ \hline
  \end{tabular}
\end{center}
}
\end{table}

\bibliographystyle{siamplain}
\bibliography{references}